\theoremstyle{plain} 
\newtheorem{tw}{Theorem}[section]	
\newtheorem{corollary}{Corollary}[section]
\theoremstyle{definition} 
\newtheorem{example}{Example}[section]
\newtheorem{cexample}{Counterexample}[section]
\newtheorem{remark}{Remark}[section]
\theoremstyle{remark}
 \def\Xint#1{\mathchoice
 {\XXint\displaystyle\textstyle{#1}}%
 {\XXint\textstyle\scriptstyle{#1}}%
 {\XXint\scriptstyle\scriptscriptstyle{#1}}%
 {\XXint\scriptscriptstyle\scriptscriptstyle{#1}}%
 \!\int}
 \def\XXint#1#2#3{{\setbox0=\hbox{$#1{#2#3}{\int}$}
 \vcenter{\hbox{$#2#3$}}\kern-.5\wd0}}
 \def\dashint{\Xint-}
\newcommand\dashintl{\dashint\limits}
\newcommand\mR{{\mathbb R}}
\newcommand\moR{{\overline{\mathbb R}}}
\newcommand\mN{{\mathbb N}}
\newcommand\cA{{\mathcal A}} 
\newcommand\cF{{\mathcal F}} 
\newcommand\cM{{\mathcal M}} 
\newcommand\sP{\mathsf{P}}
\newcommand\ve{\varepsilon}
\newcommand\mmm{\big[\inf H\circ \mu(A)\big]}
\renewcommand{\c}{\circ}
\newcommand\st{\star}
\newcommand{\rS}{\mathrm{S}} 
\newcommand\intl{\int\limits}
\newcommand\rIl[3][\mu]{\dashintl_{#2}#3 \md #1}
\newcommand\rI[3][\mu]{\textstyle{\dashint_{#2}#3 \md #1}}
\newcommand\gIl[3][\mu]{\intl_{#2}#3 \md #1}
\newcommand\gI[3][\mu]{\textstyle{\int_{#2}#3 \md #1}}
\newcommand\rrI[2][\mu]{\mathrm{Su}^\star_{#1,A}(#2)}
\newcommand\md{\,{\mathrm{d}}}
\renewcommand\ge{\geqslant}
\renewcommand\le{\leqslant}
\newcommand{\mI}[1]{\mathbbm{1}_{#1}}
\title{Sharp  bounds of Jensen type\\ for the generalized Sugeno integral}
\author{Micha\l Boczek\footnote{Corresponding author. E-mail adress: michal.boczek.1@p.lodz.pl;}, Marek Kaluszka\\{\emph{
\small{Institute of Mathematics, Lodz University of Technology, 90-924 Lodz, Poland}}}}
\date{}
\begin{document}
\maketitle

\begin{abstract}
In this paper we provide   two-sided  attainable  bounds of Jensen type for 
the generalized Sugeno integral of {\it any} measurable function.
The results extend the previous results of Rom\'an-Flores et al. for increasing functions and Abbaszadeh et al. for convex and concave functions. We also give  corrections of some  results of Abbaszadeh et al. 
As a~by-product, we obtain sharp inequalities for 
symmetric integral of Grabisch.
To the best of our knowledge,  
the  results in the real-valued functions  context are presented for the first time here. 
\end{abstract}

Keywords: Jensen inequality; Sugeno integral; Shilkret integral; $q$-integral; seminormed fuzzy integral; monotone measure.

\section{Introduction} 
Let $(X,\cA)$ be a~measurable space, where $\cA$ is a~$\sigma$-algebra of subsets of a~nonempty set $X.$
A~{\it monotone measure} on $\cA$  is a~nondecreasing set function $\mu\colon \cA\to \moR_+$, i.e. $\mu(A)\le\mu(B)$ whenever $A\subset B$ with $\mu(\emptyset)=0,$ where $\moR_+=[0,\infty].$ 
We denote the range of $\mu$ by $\mu(\cA)$ and the class of all monotone measures  on $(X,\cA)$  by $\cM_{(X,\cA)}.$    
The class of all $\cA$-measurable functions $f\colon X\to Y$
is denoted by $\mathcal F_{(X,Y)},$ where $Y\subset \moR_+.$ A~binary map   $\c\colon \moR_+\times \moR_+\to \moR_+$ is said 
to be nondecreasing  if $a\c b\le c\c d$ for all
$a\le c$ and  $b\le d$.
The \textit{generalized Sugeno integral} of $f\in\cF_{(X,\moR_+)}$ on $A\in\cA$  is defined as 
\begin{align}\label{gensugeno}
\gIl{\c, A}{f}:=\sup_{t\ge 0}\big\{t\c \mu(A\cap \{ f\ge t\} )\big\},
\end{align}
where $\{ f\ge t\}=\{x\in X\colon f(x)\ge t\},$ $\mu$ is a~monotone measure on $\cA$  and $\c$ is a~nondecreasing binary map. 
Commonly encountered examples of the generalized Sugeno integral include
the 
\textit{Sugeno integral} \cite{sug}  
\begin{align}\label{sugeno}
\rIl{A}{f}=\sup_{t\ge 0}\big\{t\wedge \mu(A\cap \{ f\ge t\} )\big\},
\end{align}
the \textit{Shilkret integral} \cite{shilkret}
\begin{align*}
\gIl{\cdot,A}{f}=\sup_{t\ge 0}\big\{t\cdot \mu(A\cap \{ f\ge t\})\big\},
\end{align*}
the \textit{q-integral} \cite{dubois2, dubois3} and  
the \textit{seminormed fuzzy integral} \cite{BHH, suarez}. 
Here and subsequently,  $a\wedge b=\min(a,b)$ and $a\vee b=\max(a,b).$

One of the most important  
inequalities 
in mathematics, economics and information theory is the Jensen inequality. The classical integral form of 
Jensen inequality states that
\begin{align}
\int _XH(f(x))\,\sP(\mathrm{d}x)\ge H\Big(\int _Xf(x)\,\sP(\mathrm{d}x)\Big),
\end{align}
where $(X,\cA,\sP)$ is a probability measure space, $H$ is a real-valued convex function on an interval $I$
of the real line, $f(x)\in I$ for all $x$ and $f\in L^1(\sP)$. 
Numerous
applications of the Jensen inequality are presented in \cite{ni,pe}. 

Given  a~function $H$ and  nondecreasing binary operations $\circ,\star$, we say that a~\textit{lower Jensen type bound} holds for the generalized Sugeno integral if there exists a function $\widehat{H}$ such that  for any $f\in \mathcal F_{(X,Y)}$ 
\begin{align}\label{nnn1}
\gIl{\c,A}{H(f)}\ge \widehat{H}\Big(\gIl{\star,A}{f}\Big).
\end{align}

Replacing  ``$\ge$'' with   ``$\le$'' in \eqref{nnn1} gives an \textit{upper Jensen type bound}. The study of 
Jensen type inequalities for the Sugeno integral was initiated by Román-Flores and Chalco-Cano \cite{flores3}.
They provided  bounds  for strictly monotone nonnegative real functions and continuous monotone measure.
Since then, the fuzzy integral counterparts of the Jensen  inequality  have been studied   by Caballero and Sadarangani \cite{caballero3}, Daraby and  Rahimi \cite{da}, 
as well as Jaddi et al. \cite{Jaddi}. Kaluszka 
et al.  \cite{boczek2} presented necessary and sufficient conditions for  
the validity  of Jensen type inequalities for the generalized Sugeno integrals under monotonicity condition.

As one of the referees pointed out, the result of Theorem 3.1 in \cite{Jaddi} is a~special case of Theorem 2.3 in \cite{boczek2}. Indeed, if $H \colon Y\to Y$ is a~differentiable convex function with $H'(y)\ge 1$ for each $y\in Y,$ then it is nondecreasing and
left-continuous on $Y,$ thus the result (necessary and sufficient condition for the Jensen integral inequality) follows from \cite[Theorem 2.3]{boczek2}. Corollaries 3.2, 3.3 and
3.4 from \cite{Jaddi} are in fact the results from \cite{boczek2} after Theorem 2.3 therein. Moreover,
the assumption on continuity of monotone measure $\mu$ is also a~superfluous
constraint in \cite{Jaddi}. Also, the result for the discrete case (Theorem 4.1 in \cite{Jaddi}) is
immediate.

 Abbaszadeh  et al.~\cite{abba} obtained new Jensen type inequalities using 
concavity/convexity of  $H,$ 
but some of these results  are not valid (see counterexamples  below).
Generalizations of 
Jensen integral inequality for the pseudo-integral are proven by Pap and \v Strboja~\cite{pap6}.
Agahi et al. \cite{aga} extended the Jensen type inequality 
on 
$g$-expectation with general kernels.
Costa~\cite{costa}  provided fuzzy versions of Jensen inequalities type integral for convex and concave fuzzy-interval-valued functions.
 
In this article, we use a~new method of proof to establish     
some Jensen type inequalities for the generalized Sugeno integral of {\it any} measurable function $H.$  
We also improve and correct the Jensen type inequalities 
for the  Sugeno integral previously proposed in the literature. 
Moreover, we give  the Jensen type bounds for the symmetric Sugeno integral introduced by Grabisch \cite{gra0}, which have not been considered in the literature so far.

 The paper is organized as follows.
In Section 2, we derive sharp lower and upper bounds for the generalized Sugeno integral and  nonnegative function $H$ without the assumptions of convexity,  concavity or monotonicity of $H$. In Section 3, we 
deduce some  
Jensen type bounds 
from  
a~Liapunov type inequality for nonnegative concave functions.
Our final section 
provides a~Jensen type inequality for  the $\star$-symmetric Sugeno integral 
having both upper and lower estimates. 

\section{Jensen type  bounds for
nonnegative functions} 

We say that a~monotone measure $\mu$ is \textit{weakly subadditive} on $A\in \cA,$ if $\mu(A)\le  \mu(A\cap B)+\mu(A\cap B^c)$ for all $B,$ where $B^c=X\backslash B$.  
A~measure $\mu$ is \textit{weakly superadditive} on $A,$  if  ``$\le$'' is replaced by ``$\ge $'' in the definition of weak subadditivity on $A$. Clearly, any subadditive measure is weakly subadditive on any measurable set $A,$ but  a~weakly subadditive measure 
 need not
 be subadditive. 
For example,  the monotone measure $\mu$ on $X=\{1,2,3\}$ defined by  $\mu(\{1,2,3\})=2,$ $\mu(\{k\})=0.5$ and $\mu(\{k,l\})=1$ for all $k,l,$ is not subadditive while  it is weakly subadditive on $A=\{1,2\}.$

Throughout the paper, $\inf H(A)=\inf _{y\in A}H(y),$ $\sup H(A)=\sup _{y\in A}H(y)$,
$\inf H=\inf H(\moR_+),$ and $\sup H=\sup H(\moR_+)$ for any function $H\colon \moR_+\to \moR_+$ and $A\subset \moR_+$.   
Denote by $H(p_-)$ and $H(p_+)$ the lower left-hand  limit and the lower right-hand  limit of 
$H$ at $p,$ respectively, that is, 
$
H(p_-)=\lim_{\ve\to 0} \inf H((p-\ve,p))$ and $H(p_+)=\lim_{\ve\to 0} \inf H((p,p+\ve)).
$  
Hereafter, $H(0_-)=0$.

First, we give lower bounds of Jensen type. 
\begin{tw}\label{tw1}
Suppose that  $\c\colon \moR_+\times \moR_+\to \moR_+$ is a~nondecreasing map such that $a\c 0=0$ for all $a$ and  $x\mapsto x\c y$  is a~left-continuous function for any fixed $y$. 
Suppose also that $f,H(f)\in\cF_{(X,\moR_+)}$ and $p=\rI{A}{f}<\infty.$   
 \begin{enumerate}
 \item[(i)] The following inequality holds 
\begin{align}\label{in1}
\gIl{\c,A}{H(f)}\ge \big[\big(H(p_-)\wedge \inf H([p,\infty])\big)\c p\big] \vee \mmm.
\end{align}
 There is equality in  \eqref{in1}  for $f=\mu(A)\mI{A}$ if $H$ is left-continuous at $p$
and $H(p)=\inf H([p,\infty])$.
\item[(ii)] If  
$\mu$ is weakly subadditive on $A$, 
then 
\begin{align}\label{in2}
\gIl{\c,A}{H(f)}\ge \big[\big(H(p_+)\wedge \inf H([0,p])\big)\c (\mu(A)-p)\big]\vee \mmm.
\end{align} 
The equality holds in \eqref{in2} if
$f=y_0\mI{A},$   $H$ is 
right-continuous at $p$, $H(p)=\inf H([0,p])$ and 
$H(y_0)=\inf H$ for some $y_0$.
\end{enumerate}
\end{tw}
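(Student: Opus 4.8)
The plan is to bound the supremum defining $\gIl{\c,A}{H(f)}$ from below by evaluating the bracketed expression at carefully chosen levels $t$. Two facts will be used throughout. First, since $H\ge 0$ on $\moR_+$, the level set $\{H(f)\ge \inf H\}$ equals $X$, so taking $t=\inf H$ in \eqref{gensugeno} gives $\gIl{\c,A}{H(f)}\ge \inf H\c \mu(A)=\mmm$; this accounts for the right-hand term of the maximum in both \eqref{in1} and \eqref{in2}. Second, writing $\phi(t)=\mu(A\cap\{f\ge t\})$ (a nonincreasing function of $t$) and recalling the characterization $p=\rI{A}{f}=\sup\{t\ge 0\colon \phi(t)\ge t\}$, a short monotonicity argument yields the two one-sided estimates
\begin{align*}
\phi(s)\ge p\quad\text{for }s<p,\qquad\text{and}\qquad \phi(s)\le p\quad\text{for }s>p.
\end{align*}
I would obtain the first by choosing, for $s<p$, levels $r\in(s,p]$ with $\phi(r)\ge r$ arbitrarily close to $p$ and using $\phi(s)\ge\phi(r)\ge r$; the second by noting that $\phi(s)\ge s$ would force $s\le p$.

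For part (i), set $c=H(p_-)\wedge \inf H([p,\infty])$; the goal is $\gIl{\c,A}{H(f)}\ge c\c p$. I would fix any level $t<c$. Since $t<H(p_-)=\sup_{\ve>0}\inf H((p-\ve,p))$, there is $\ve\in(0,p)$ with $\inf H((p-\ve,p))>t$; and since $t<\inf H([p,\infty])$, every $y\ge p$ satisfies $H(y)>t$. Hence $H(y)>t$ for all $y>p-\ve$, so $\{f>p-\ve\}\subseteq\{H(f)\ge t\}$. Choosing $s\in(p-\ve,p)$ gives $A\cap\{f\ge s\}\subseteq A\cap\{f>p-\ve\}\subseteq A\cap\{H(f)\ge t\}$, and monotonicity of $\mu$ together with $\phi(s)\ge p$ yields $\mu(A\cap\{H(f)\ge t\})\ge p$. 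Therefore, by monotonicity of $\c$,
\begin{align*}
\gIl{\c,A}{H(f)}\ge t\c\mu(A\cap\{H(f)\ge t\})\ge t\c p.
\end{align*}
Letting $t\uparrow c$ and invoking left-continuity of $x\mapsto x\c p$ gives $\gIl{\c,A}{H(f)}\ge c\c p$, which combined with the $\mmm$ bound proves \eqref{in1}. The degenerate cases $p=0$ and $c=0$, where $c\c p$ reduces to $0\c p$, are handled directly by the level $t=0$, using $\{H(f)\ge0\}=X$ and $\mu(A)\ge p$.

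Part (ii) is the mirror image, now exploiting weak subadditivity. Put $c'=H(p_+)\wedge\inf H([0,p])$. For $t<c'$ I would pick $\ve>0$ with $\inf H((p,p+\ve))>t$; since also $H(y)>t$ for every $y\in[0,p]$, one gets $H(y)>t$ for all $y<p+\ve$, i.e.\ $\{f<p+\ve\}\subseteq\{H(f)\ge t\}$. Applying weak subadditivity of $\mu$ on $A$ to the pair $\{f<p+\ve\}$, $\{f\ge p+\ve\}$ gives
\begin{align*}
\mu(A)\le \mu(A\cap\{f<p+\ve\})+\mu(A\cap\{f\ge p+\ve\}),
\end{align*}
and since $p+\ve>p$ forces $\phi(p+\ve)\le p$, I obtain $\mu(A\cap\{f<p+\ve\})\ge\mu(A)-p$ (read in $\moR_+$, hence correct even when $\mu(A)=\infty$). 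As before $t\c(\mu(A)-p)\le \gIl{\c,A}{H(f)}$, and passing $t\uparrow c'$ through left-continuity and combining with $\mmm$ gives \eqref{in2}.

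Finally, I would verify the equality assertions by direct substitution. For \eqref{in1} with $f=\mu(A)\mI{A}$ one computes $p=\mu(A)$ and $H(f)=H(p)$ on $A$, so both sides reduce to $H(p)\c\mu(A)$ once left-continuity at $p$ gives $H(p_-)=H(p)$ and the hypothesis $H(p)=\inf H([p,\infty])$ makes $c=H(p)$; for \eqref{in2} with $f=y_0\mI{A}$ the analogous computation uses right-continuity at $p$ and $H(y_0)=\inf H$. I expect the main obstacle to be not these computations but the careful handling of the level sets of $H(f)$ when $H(p_-)$ (resp.\ $H(p_+)$) is only a lower limit and need not be attained: the inequalities must be kept strict ($>t$ rather than $\ge c$) and the target value recovered through the left-continuity limit $t\uparrow c$, while simultaneously tracking the extended-real subtraction $\mu(A)-p$ and the degenerate cases.
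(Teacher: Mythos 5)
Your proof is correct and is essentially the paper's argument: both rest on the estimates $\mu(A\cap\{f\ge y\})\ge p$ for $y<p$ and $\mu(A\cap\{f>y\})\le p$ for $y>p$ (plus weak subadditivity in (ii)), followed by a limit passage through the left-continuity of $x\mapsto x\c y$. The only difference is presentational — the paper packages the level-by-level bound as the integral of an explicit two-valued minorant $H_0\le H$ and lets $\ve\to 0$, whereas you test levels $t<c$ directly and let $t\uparrow c$.
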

\begin{proof} 
(i) Assume that $p>0,$ as the bound \eqref{in1} is trivial for $p=0$.    
Let  $h(\ve) =\inf H([p-\ve,\infty])$ for $\ve\in (0,p).$ Define  $H_0(s)=\inf H$ for $s<p-\ve$ and $H_0(s)=h(\ve)$ for $s\ge p-\ve$. Clearly, $H(s)\ge H_0(s)$ for all $s\in \moR_+.$
Thus,  we have from the monotonicity of the generalized Sugeno integral that 
\begin{align*}
\gIl{\c,A}{H(f)}&\ge \gIl{\c,A}{H_0(f)}=\sup_{0\le t\le \inf  H}\left\{t\c \mu(A)\right\}\vee \sup _{t>\inf H}\left\{ t\c \mu(A\cap \{H_0(f)\ge t\} )\right\}
\\&= \mmm\vee  \big[ h(\ve)\c \mu(A\cap \{H_0(f)\ge h(\ve)\})\big]
\\&=\mmm\vee  \big[h(\ve)\c \mu(A\cap \{f\ge p-\ve\} )\big].
\end{align*}
It is well known that   $\mu(A\cap\{ f\ge y\})\ge p$ for all $y<p,$ where $p=\rI{A}{f}$ (see \cite[Lemma 9.7]{wang}). Therefore, 
\begin{align*}
\gIl{\c,A}{H(f)}
&\ge \mmm\vee \big[h(\ve)\c p\big].
\end{align*}
By left-continuity  of 
$x\mapsto x\c p$ and monotonicity of $h(\ve),$ we obtain
\begin{align}\label{no1}
 \gIl{\c,A}{H(f)}&\ge \mmm\vee \lim _{\ve\to 0} \big[h(\ve)\c p\big]\nonumber\\
&=\mmm\vee \big[\lim _{\ve \to 0}h(\ve)\c p\big] \nonumber \\&=\mmm\vee \big[\big(H(p_-)\wedge \inf H([p,\infty])\big)\c p\big].
 \end{align}  
Equality holds in \eqref{no1} for   $f=\mu(A)\mI{A}$  provided 
that $H$ is left-continuous at $p$ and $H(p)=\inf H([p,\infty])$. 

\noindent (ii)  Let $h(\ve)=\inf H([0,p+\ve])$ for all  
$\ve>0$. 
Put $H_0(s)=\inf H$ if $s>p+\ve$ and $H_0(s)=h(\ve)$ if $s\le p+\ve$. 
Weak subadditivity of $\mu$ implies  that
\begin{align*}
\gIl{\c,A}{H(f)}&\ge \gIl{\c,A}{H_0(f)}= \mmm\vee \sup _{t> \inf H}
\left\{ t\c \mu(A\cap \{H_0(f)\ge t\} )\right\}\\
&=\mmm\vee 
\big[h(\ve)\c \mu(A\cap \{f\le p+\ve\} )\big]\\
&\ge  \mmm\vee 
\big[h(\ve)\c (\mu(A)-\mu (A\cap\{f>p+\ve\}))\big].
 \end{align*} 
It follows from \cite[Lemma 9.7]{wang} that $\mu(A\cap\{ f>y\} )\le p<\infty$ for all $y>p.$ 
By the  monotonicity of $h(\ve)$ and left-continuity of map $y\mapsto y\c (\mu(A)-p)$, we get  
\begin{align}\label{no3}
\gIl{\c,A}{H(f)}&\ge  \mmm\vee \lim_{\ve\to 0}\big[h(\ve)\c (\mu(A)-p)\big]\nonumber\\
&=\mmm\vee  \big[(H(p_+)\wedge \inf H([0,p]))\c (\mu(A)-p)\big].
\end{align} 
There is  equality in \eqref{no3} 
for  
$f=y_0\mI{A}$, 
if  $H$ is right-continuous at $p,$ $H(y_0)=\inf H$   and $H(p)=\inf H([0,p]).$ Here and subsequently, $\infty\cdot 0=0.$ 
\end{proof}

\begin{remark} The bound \eqref{in1} (resp. \eqref{in2}) is sharp for each $p,$
if function $H$ is nondecreasing and left-continuous (resp. nonincreasing and right-continuous).
Moreover, if  $\mu$ is  a~subadditive  monotone measure and $H$ is 
a~continuous function, then $H(p_-)=H(p_+)=H(p)$ and  we have from \eqref{in1} and 
\eqref{in2} that 
\begin{align}\label{noo1}
\gIl{\c,A}{H(f)}\ge \big[\inf H([p,\infty])\c p\big] \vee
\big[\inf H([0,p])\c (\mu(A)-p)\big]\vee \mmm.
\end{align}
Assume  additionally that $H$ is quasiconvex, that is, $H$ is nonincreasing on 
$[0,a]$ and nondecreasing  on $[a,\infty]$ for some $a\in (0,\infty)$ \cite[\hbox{p. 99}]{boyd}. Then the bound \eqref{noo1} is attainable for every $p$ as
$H(p)=\inf H([p,\infty])$ or $H(p)=\inf H([0,p]).$ 
\end{remark}

Now we provide some consequences of  Theorem~\ref{tw1} for 
the Sugeno integral.

\begin{corollary} \label{co0} Assume that $H\colon \moR_+\to \moR_+$ is nondecreasing and left-continuous at $p$, where  
$p=\rI{A}{f}<\infty$ and $f\in\cF_{(X,\moR_+)}.$
Then the following sharp bound holds  
\begin{align}\label{flo}
\rIl{A}{H(f)}\ge H(p)\wedge p.
\end{align}  
\end{corollary}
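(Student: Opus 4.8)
The plan is to read off \eqref{flo} as the instance $\c=\wedge$ of Theorem~\ref{tw1}(i), since the Sugeno integral $\rIl{A}{\cdot}$ is precisely the generalized Sugeno integral with the operation $a\c b=a\wedge b$. First I would check that $\wedge$ is admissible for Theorem~\ref{tw1}(i): one has $a\wedge 0=0$ for every $a$, and the map $x\mapsto x\wedge y$ is continuous, hence left-continuous, for each fixed $y$. Furthermore, a nondecreasing $H$ is Borel measurable, so $H(f)\in\cF_{(X,\moR_+)}$, and $p=\rI{A}{f}<\infty$ by hypothesis. Thus Theorem~\ref{tw1}(i) applies and gives
\begin{align*}
\rIl{A}{H(f)}\ge \big[\big(H(p_-)\wedge \inf H([p,\infty])\big)\wedge p\big]\vee \big[\inf H\wedge \mu(A)\big].
\end{align*}

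Next I would simplify the first bracket using the monotonicity and left-continuity of $H$. Since $H$ is nondecreasing, the infimum over $[p,\infty]$ is attained at the left endpoint, so $\inf H([p,\infty])=H(p)$. For the lower left-hand limit, monotonicity gives $\inf H((p-\ve,p))=\lim_{s\downarrow p-\ve}H(s)$, and sending $\ve\to 0$ identifies $H(p_-)$ with the ordinary left limit $\lim_{s\uparrow p}H(s)$; the assumed left-continuity at $p$ then forces $H(p_-)=H(p)$. Hence $H(p_-)\wedge\inf H([p,\infty])=H(p)$, and the first bracket reduces to $H(p)\wedge p$. As the right-hand side above is the maximum of two nonnegative quantities, it dominates its first argument, so discarding the term $\inf H\wedge\mu(A)$ yields $\rIl{A}{H(f)}\ge H(p)\wedge p$, which is \eqref{flo}.

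Finally, for sharpness I would invoke the equality clause of Theorem~\ref{tw1}(i). Monotonicity of $H$ makes $H(p)=\inf H([p,\infty])$ automatic, and $H$ is left-continuous at $p$ by assumption, so both equality conditions there are met; taking $f=\mu(A)\mI{A}$ (for which $p=\mu(A)$) turns \eqref{in1} into an identity equal to $H(p)\wedge p$, so the bound cannot be improved. The only nonroutine point in the whole argument is the identification $H(p_-)=H(p)$: one must verify that the lower left-hand limit appearing in Theorem~\ref{tw1} coincides with the classical left limit of a nondecreasing function, which is exactly where left-continuity at $p$---rather than mere monotonicity---enters.
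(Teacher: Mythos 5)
Your proposal is correct and follows exactly the paper's own route: the paper's proof of Corollary~\ref{co0} is precisely ``apply Theorem~\ref{tw1}(i) with $\circ=\wedge$,'' with attainability for $f=\mu(A)\mI{A}$. You merely spell out the routine simplifications ($\inf H([p,\infty])=H(p)$ and $H(p_-)=H(p)$ via monotonicity and left-continuity, then discarding the second term of the maximum) that the paper leaves implicit.
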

\begin{proof} Apply  Theorem \ref{tw1}\,(i) with $\circ=\wedge.$ 
Inequality \eqref{flo} is attainable if  $f=\mu(A)\mI{A}$.
\end{proof}

Corollary \ref{co0} generalizes   
Corollary $3.3$ of  \cite{agahi16},   
Lemma 1 of \cite{caballero3} and {\cite[Theorem $2.1$]{boczek2}} for  $\c=\st=\wedge$ and $Y=\moR_+.$

The following example shows that the equality in \eqref{flo} may be 
achieved by  a~nonconstant function $f.$

\begin{example}\label{ex3} 
Let $X=\mR_+$, $A=\{1,2,3,4,5\}$ and $\mu$  
be the counting measure on $\mR_+,$ which means that $\mu(B)=\infty$, if $B$ is an infinite subset of $\mR_+$ and $\mu(B)= \textrm{card} (B),$
if $B$ is a~finite subset of $\mR_+.$ Take $H(x)=x^2/3$ and  $f(x)=x$.  Then
\begin{align*}
\rIl{A}{f}&=\sup _{t\ge 0}
\left\{t\wedge \mu(A\cap \{ f\ge t\})\right\}=
\max_{i\in A}\{i\wedge (6-i)\}=3,\\ 
\rIl{A}{H(f)}&=\max_{i\in A}\left\{(i^2/3)\wedge (6-i)\right\}=3,
\end{align*}
so the bound  \eqref{flo} is reached if $f(x)=x$. 
\end{example}
\medskip

The next Corollary  is a~corrected version of  Theorem 5.1  in
\cite{abba}. 
    
\begin{corollary} \label{co1} Suppose that $H\colon \mR_+\to 
\mR_+$ is a~convex function which attains its infimum at point $a.$  The 
sharp inequality 
\begin{align}\label{in2a}
\rIl{A}{H(f)}\ge H(p)\wedge p
\end{align} 
 holds for any  $f\in \cF_{(X,\mR_+)}$  
such that  
$p=\rI{A}{f}\in [a,\infty).$
\end{corollary}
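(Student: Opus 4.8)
The plan is to deduce the bound from Corollary~\ref{co0} by replacing $H$ with an auxiliary \emph{nondecreasing} function that agrees with $H$ on $[a,\infty)$ and lies below $H$ everywhere. Concretely, I would set $G\colon \mR_+\to\mR_+$, $G(y)=H(y\vee a)$, and establish three properties. First, $G$ is nondecreasing: since $H$ is convex and attains its minimum at $a$, it is nondecreasing on $[a,\infty)$ (for $a\le y_1<y_2$ write $y_1$ as a convex combination of $a$ and $y_2$ and use $H(a)\le H(y_2)$), while on $[0,a]$ the function $G$ is the constant $H(a)=\inf H$; hence $G$ is nondecreasing on all of $\mR_+$. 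Second, $G$ is left-continuous at $p$: if $p>a$ then $p$ lies in $(0,\infty)$, where the convex function $H=G$ is continuous, and if $p=a$ then $G$ is constant immediately to the left of $p$. Third, $G\le H$ pointwise with $G(p)=H(p)$, because $G(y)=H(y)$ for $y\ge a$, $G(y)=\inf H\le H(y)$ for $y<a$, and $p\ge a$.

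With these facts, the argument is short. By the pointwise inequality $G\le H$ and the monotonicity of the Sugeno integral (the function $G(f)$ being measurable as $G$ is monotone), I get $\rI{A}{H(f)}\ge\rI{A}{G(f)}$. Since $G$ is nondecreasing and left-continuous at $p=\rI{A}{f}$, Corollary~\ref{co0} applied to $G$ yields $\rI{A}{G(f)}\ge G(p)\wedge p=H(p)\wedge p$, the last equality coming from $G(p)=H(p)$. Chaining the two inequalities gives \eqref{in2a}.

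For sharpness I would take $f=\mu(A)\mI{A}$ under the admissible condition $\mu(A)\ge a$, so that $p=\rI{A}{f}=\mu(A)\in[a,\infty)$. A direct evaluation then shows $\rI{A}{f}=\mu(A)$ and $\rI{A}{H(f)}=H(\mu(A))\wedge\mu(A)=H(p)\wedge p$, so the lower bound is attained.

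The main obstacle is conceptual rather than computational: because $H$ need not be monotone on all of $\mR_+$, Corollary~\ref{co0} cannot be invoked for $H$ itself. The truncation $G(y)=H(y\vee a)$ is exactly the device that restores monotonicity while both preserving the value at $p$ (thanks to $p\ge a$) and staying below $H$; the only delicate point is verifying left-continuity of $G$ in the boundary case $p=a$.
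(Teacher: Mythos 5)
Your proof is correct. The paper does not spell out an argument for Corollary~\ref{co1}; the intended route is to apply Theorem~\ref{tw1}\,(i) with $\c=\wedge$ directly to $H$ itself, observing that for a convex $H$ minimized at $a$ and $p\ge a$ one has $\inf H([p,\infty])=H(p)$ and $H(p_-)\ge H(p)$ (by monotonicity and continuity of $H$ on $[a,\infty)$, with the case $p=a$ handled because $H(a)=\inf H$), so that the general bound \eqref{in1} collapses to $H(p)\wedge p$; sharpness then comes from the equality clause of Theorem~\ref{tw1}\,(i). You instead reduce to the already-proved monotone case, Corollary~\ref{co0}, via the truncation $G(y)=H(y\vee a)$, which is a nondecreasing minorant of $H$ agreeing with $H$ at $p$. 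The two arguments are close cousins --- your $G$ plays essentially the role of the auxiliary $H_0$ inside the proof of Theorem~\ref{tw1} --- but your reduction has the merit of never touching the lower one-sided limits $H(p_-)$, $\inf H([p,\infty])$, with the only delicate point (left-continuity of $G$ at $p$ when $p=a$) handled correctly by the constancy of $G$ on $[0,a]$. The direct route, on the other hand, is slightly more economical and hands you the attainability condition for free from the statement of Theorem~\ref{tw1}\,(i); your separate verification with $f=\mu(A)\mI{A}$ (assuming $\mu(A)\in[a,\infty)$) matches what the paper does for Corollary~\ref{co0} and is fine.
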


\noindent It is easy to check that Theorem 5.1  in
 \cite{abba} is 
not true without the additional 
assumption that $\varphi '(p)=1,$
but this assumption implies that $\varphi(p)\le \varphi'(p)p=p,$ where we follow the notation  in
 \cite{abba}.  
\medskip

 \begin{cexample}\label{cex1}
Let us consider the space $([0,5],\cA,\mu)$ with the Lebesgue measure $\mu.$ Take $\varphi(x)=(x-0.5)^2$ and $f(x)=x.$ Clearly, $\varphi$ is 
a~differentiable convex function and $\varphi(x)\le x\varphi'(x)$ for  $x\in [0,5].$ All assumptions of Theorem 5.1 from \cite{abba} are satisfied.  It is easy to check that  $\rI{[0,5]}{f}=\sup_{t\ge 0}\left\{ t\wedge (5-t)\right\}=2.5$ and
\begin{align*}
\rIl[\mu]{[0,5]}{\varphi(f)}&=\sup_{t\in [0,\,0.25)}\big\{ t\wedge (5-2\sqrt{t})\big\}\vee\sup_{t\in [0.25,\,4.5^2]}\big\{ t\wedge (4.5-\sqrt{t})\big\}\\&=\frac{10-\sqrt{19}}{2}\in (2.82,2.821). 
\end{align*}
Thus, $\varphi\big(\rI{[0,5]}{f}\big)=4>\rI{[0,5]}{\varphi(f)}.$ Note that $\varphi'\big(\rI{[0,5]}{f}\big)=4\neq 1,$ but \eqref{in2a} holds, i.e.
\begin{align*}
\rIl{A}{\varphi(f)}\ge \varphi(p)\wedge p=2.5.
\end{align*}
\end{cexample}
 
Next we provide a~lower bound of Jensen type for the Shilkret integral.
\begin{corollary}\label{cono} 
If  $H\colon \mR_+\to \mR_+$ is nondecreasing left-continuous at $p=\rI{A}{f},$ 
where 
$f\in\cF_{(X,\mR_+)}$ and $\mu(A)<\infty,$
then  the following attainable bound is valid
\begin{align}\label{pp1}
\gIl{\cdot,A}{H(f)}\ge H(p)p.
\end{align}  
\end{corollary}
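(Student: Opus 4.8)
The plan is to derive Corollary~\ref{cono} as a direct specialization of Theorem~\ref{tw1}\,(i), choosing the binary operation $\c$ to be ordinary multiplication. First I would verify that multiplication satisfies the three hypotheses imposed on $\c$ in Theorem~\ref{tw1}: it is a nondecreasing map on $\moR_+\times\moR_+$, it satisfies $a\cdot 0=0$ for all $a$ (with the convention $\infty\cdot 0=0$ stated at the end of the proof of Theorem~\ref{tw1}), and for each fixed $y$ the map $x\mapsto x\cdot y$ is continuous, hence in particular left-continuous. Thus all structural assumptions on $\c$ are met, and $H(f)\in\cF_{(X,\moR_+)}$ follows from $H\colon\mR_+\to\mR_+$ being measurable together with $f\in\cF_{(X,\mR_+)}$.

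Next I would simplify the right-hand side of \eqref{in1} under the present hypotheses. Since $H$ is nondecreasing, we have $\inf H([p,\infty])=H(p)$, and the left-continuity of $H$ at $p$ gives $H(p_-)=H(p)$; consequently $H(p_-)\wedge\inf H([p,\infty])=H(p)$. Therefore the first term of the bound becomes $H(p)\cdot p$. For the second term, note that $\inf H\circ\mu(A)=\inf H\cdot\mu(A)$; because $H$ is nondecreasing and nonnegative, $\inf H=\inf H([0,\infty])=H(0_+)\ge 0$, and since $H(p)\ge \inf H$ while $p\ge 0$, one checks $H(p)\cdot p\ge \inf H\cdot p$. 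However the second term uses the full measure $\mu(A)$ rather than $p$, so this comparison is not automatic; here the finiteness assumption $\mu(A)<\infty$ and the fact that $p\le\mu(A)$ matter. I would argue that the displayed bound \eqref{pp1} records only the first term $H(p)\,p$, which is always a valid lower bound because the right-hand side of \eqref{in1} is a maximum of two quantities and hence dominates either one individually.

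Thus the core of the argument is the chain
\begin{align*}
\gIl{\cdot,A}{H(f)}\ge \big[\big(H(p_-)\wedge\inf H([p,\infty])\big)\cdot p\big]\vee\mmm\ge H(p)\cdot p,
\end{align*}
where the first inequality is \eqref{in1} with $\c=\cdot$ and the equality $H(p_-)\wedge\inf H([p,\infty])=H(p)$ is substituted. For attainability I would exhibit $f=\mu(A)\mI{A}$, invoking the equality clause of Theorem~\ref{tw1}\,(i): left-continuity of $H$ at $p$ holds by hypothesis, and $H(p)=\inf H([p,\infty])$ holds because $H$ is nondecreasing, so both conditions of the equality statement are satisfied. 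The finiteness of $\mu(A)$ guarantees that $p=\rI{A}{f}$ is finite, which is required for Theorem~\ref{tw1} to apply.

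The main obstacle I anticipate is purely one of bookkeeping with the extended arithmetic rather than any substantive difficulty: one must be careful that the convention $\infty\cdot 0=0$ is used consistently (relevant if $H(p)=\infty$ and $p=0$, or if $\inf H=\infty$), and that the reduction of $H(p_-)\wedge\inf H([p,\infty])$ to $H(p)$ genuinely uses both monotonicity (for the infimum) and left-continuity at $p$ (for the left limit). Since Theorem~\ref{tw1} does the analytic heavy lifting, no limiting argument or measure-theoretic estimate needs to be redone; the proof is essentially a one-line appeal to part (i) with the explicit choice $\c=\cdot$ and the observation that the monotonicity of $H$ collapses the two ``$\wedge$'' ingredients into $H(p)$.
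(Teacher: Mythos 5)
Your proposal is correct and follows essentially the same route as the paper: both apply Theorem~\ref{tw1}\,(i) with $\c=\cdot$, use monotonicity of $H$ to get $\inf H([p,\infty])=H(p)$ and left-continuity at $p$ to get $H(p_-)=H(p)$, and then drop the second term of the maximum to obtain $H(p)\,p$. Your additional remarks on the extended-arithmetic conventions and on attainability via $f=\mu(A)\mI{A}$ are consistent with the paper's treatment.
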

\begin{proof}
Setting $\circ=\cdot$ in Theorem~\ref{tw1}\,(i),  
we get
\begin{align*}
\gIl{\cdot,A}{H(f)}\ge  \big[(H(p)\wedge H(p)) \cdot p\big]\vee \big[H(0)\cdot \mu(A)\big]\ge H(p)p.
\end{align*}
\end{proof}

\begin{example}\label{ex4}
Let $X=[0,1]$ and $\mu=\lambda^q,$ where $\lambda$ is the Lebesgue measure and $q>0$.  Take $H(x)=x^{1/q}$ and  $f(x)=x^q$.  Then
\begin{align*}
\rIl{X}{f}&=\sup_{0\le t\le 1}\{ t\wedge (1-t^{1/q})^q\}=0.5^q,\\
\gIl{\cdot,X}{H(f)}&=\sup_{0\le t\le 1}\{ t\cdot (1-t)^q\}=\frac{1}{q}\left(\frac{q}{1+q}\right)^{q+1}.
\end{align*}
We  get from \eqref{pp1} that
$\gI{\cdot,X}{H(f)}\ge 0.5^{q+1}.$
\end{example}

\begin{example}\label{ex5} Using Corollary \ref{cono} for $H(x)=ax$, $a>0,$ we obtain the following inequality for the Shilkret integral 
\begin{align*}
\gIl{\cdot,A}{f}\ge \Big(\rIl{A}{f}\Big)^2,
\end{align*}
where $f\in\cF_{(X,\mR_+)}.$ This bound is  obvious (see the geometric interpretation of the Sugeno integral and the Shilkret integral), but  it shows that   the equality  in \eqref{in1} may hold
not only for piecewise constant functions $f$ when $\c=\cdot$.
\end{example} 
\medskip

Recall that a~nondecreasing map $\otimes \colon [0,1]^2\to [0,1]$ is said to be a~\textit{fuzzy conjunction}
 if $1\otimes  1=1$ and $0\otimes  1=1\otimes  0=0\otimes  0=0$  (see \cite[Definition 2]{dubois3}). 
 The special case of 
 the
  fuzzy conjunction is  a~\textit{semicopula}, which has extra limit conditions
 $a\otimes  1=1\otimes  a=a$ 
 (cf.~\cite{gra1}).
Dubois et al. \cite{dubois2} introduced and studied the $q$-integral defined as
\begin{align}
\int_\mu^\otimes f=\sup_{t\in [0,1]}\big\{\mu (\{f\ge t\})\otimes t\big\},
\end{align} 
where $\otimes$ denotes a~fuzzy conjunction, $f\in\cF_{(X,[0,1])}$ and $X$ is a~finite set (see also~\cite{dubois3}). This  
definition is motivated by alternative ways of using weights of qualitative criteria in min- and max-based aggregations, that make intuitive sense as tolerance thresholds.
In the literature, we can find  Jensen type bounds for  q-integral if $H$  is a~nondecreasing function (see~\cite[Theorems 2.1-2.3 and  Theorem 3.3]{boczek2}). Now we give  
their counterparts for a~quasiconvex function $H$. 

 \begin{corollary}\label{coq} Assume that  a~fuzzy conjunction $\otimes$ is  left-continuous in the second coordinate,  
 $\mu(\cA)\subset [0,1]$ and  $H\colon [0,1]\to [0,1]$ is a~quasiconvex function which attains its infimum at  point $a_0.$ 
Then, for all $f\in \cF_{(X,[0,1])}$  such that  $p=\textstyle{\int _\mu^\otimes f}\in [a_0,1]$, 
we have 
\begin{align*}
\int _\mu^\otimes H(f)\ge p\otimes H(p_-).
\end{align*}
\end{corollary}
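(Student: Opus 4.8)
The plan is to derive Corollary~\ref{coq} directly from Theorem~\ref{tw1}\,(i) by specializing the general binary map $\circ$ to the fuzzy conjunction $\otimes$. The key observation is that the $q$-integral $\int_\mu^\otimes f$ is exactly the generalized Sugeno integral $\gI{\circ,X}{f}$ with $A=X$ and $a\circ b := \mu(\dots)\otimes t$ read in the right order, namely with $a\c b = b\otimes a$ so that $t\c\mu(\{f\ge t\}) = \mu(\{f\ge t\})\otimes t$. First I would verify that this $\circ$ satisfies the hypotheses of Theorem~\ref{tw1}\,(i): nondecreasingness of $\circ$ follows from nondecreasingness of the fuzzy conjunction $\otimes$; the condition $a\c 0 = 0$ for all $a$ is precisely $0\otimes a = 0$, which is part of the definition of a fuzzy conjunction; and left-continuity of $x\mapsto x\c y$ (i.e.\ of $x\mapsto y\otimes x$) is the assumed left-continuity of $\otimes$ in its second coordinate.

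Next I would apply inequality~\eqref{in1} with these identifications. Since $H$ is quasiconvex attaining its infimum at $a_0$ and $p=\int_\mu^\otimes f \in [a_0,1]$, the function $H$ is nondecreasing on $[a_0,1]\supset [p,1]$, so $\inf H([p,\infty]) = \inf H([p,1]) = H(p_-)$ (using that $H$ is defined on $[0,1]$ and nondecreasing past $a_0$, its left-hand lower limit at $p$ coincides with its infimum on $[p,1]$). Hence the term $H(p_-)\wedge \inf H([p,\infty])$ in~\eqref{in1} collapses to $H(p_-)$. The right-hand side of~\eqref{in1} therefore becomes $\big[H(p_-)\c p\big]\vee \mmm = \big(p\otimes H(p_-)\big)\vee \mmm$, and discarding the nonnegative term $\mmm$ yields $\int_\mu^\otimes H(f)\ge p\otimes H(p_-)$, which is the claim.

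The main obstacle I anticipate is handling the two order-of-arguments conventions cleanly: the $q$-integral writes $\mu(\cdot)\otimes t$ whereas the generalized Sugeno integral in~\eqref{gensugeno} writes $t\c\mu(\cdot)$, so I must be careful that the map $\circ$ I feed into Theorem~\ref{tw1} is $a\c b = b\otimes a$, and then check that the hypotheses of Theorem~\ref{tw1} translate into hypotheses on $\otimes$ in the \emph{correct} coordinate. In particular, Theorem~\ref{tw1} demands left-continuity of $x\mapsto x\c y$ for fixed $y$, which after the swap is left-continuity of $x\mapsto y\otimes x$, matching exactly the stated ``left-continuous in the second coordinate'' assumption; this coordinate bookkeeping is the only genuinely delicate point. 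A secondary point worth a sentence is the identification $\inf H([p,\infty]) = H(p_-)$: on $[0,1]$ the relevant infimum is over $[p,1]$, and quasiconvexity together with $p\ge a_0$ guarantees $H$ is nondecreasing there, so both quantities equal $H(p_-)$ and the minimum in~\eqref{in1} is redundant. Once these identifications are in place, the conclusion is immediate from~\eqref{in1}.
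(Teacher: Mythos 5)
Your proposal is essentially the paper's own proof: the paper likewise reduces the $q$-integral to the generalized Sugeno integral by swapping the arguments of $\otimes$ and then invokes Theorem~\ref{tw1}\,(i), with the same coordinate bookkeeping for left-continuity that you identify as the delicate point. The only technical detail you gloss over is that Theorem~\ref{tw1} requires $\c$ to be defined on all of $\moR_+\times\moR_+$, whereas $b\otimes a$ lives only on $[0,1]^2$; the paper fixes this by setting $a\c b=(b\wedge 1)\otimes(a\wedge 1)$ and checking that the supremum over $t\in[0,1]$ coincides with the supremum over $t\ge 0$. Also, your identification $\inf H([p,\infty])=H(p_-)$ is not quite right (for $H$ nondecreasing past $a_0$ one has $\inf H([p,1])=H(p)\ge H(p_-)$), but since the theorem's bound involves the minimum $H(p_-)\wedge\inf H([p,\infty])$, it still collapses to $H(p_-)$ and your conclusion stands.
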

\begin{proof} Put $a\c b=(b\wedge 1)\otimes (a\wedge 1)$ 
in \eqref{gensugeno}.
Note that 
$a\c 0=0$ as $0\le 0\otimes a\le 0\otimes 1=0$ for all $a$. Moreover,
\begin{align*}
\int _\mu^\otimes H(f)=\sup_{t\in [0,1]}\left\{t\c \mu (\{H(f)\ge t\})\right\}=
\sup_{t\ge 0}\left\{t\c \mu (\{H(f)\ge t\})\right\}=\gIl{\c,X}{H(f)}.
\end{align*} 
The assertion follows from Theorem \ref{tw1}\,(i). 
\end{proof}

Applying Theorem \ref{tw1} one can also obtain lower bounds by means of $\gI{\c,A}{f}$ instead of the Sugeno integral.

\begin{corollary}
Assume that   a~semicopula $\rS\colon [0,1]^2\to [0,1]$ is left-continuous in the first coordinate, $\mu(\cA)\subset [0,1]$  
 and  $H\colon [0,1]\to [0,1]$  is a~left-continuous and nondecreasing function on $[a_0,1]$ for some $a_0\in [0,1].$ 
Then  the following  sharp inequality for the seminormed fuzzy integral holds for all $f\in \cF_{(X,[0,1])}$  
\begin{align*}
\gIl{\rS,A}{H(f)}\ge \rS(H(p_S),p_S),
\end{align*}
where $p_S:=\gI{\rS,A}{f}\in [a_0,1].$
\end{corollary}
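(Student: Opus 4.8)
The plan is to recognise $\gIl{\rS,A}{\cdot}$ as a generalized Sugeno integral with $\c=\rS$ and then re-run the argument of Theorem~\ref{tw1}\,(i), with the seminormed integral $p_S$ taking over the role that the Sugeno integral $p$ played there. First I would verify that $\rS$ meets the hypotheses of Theorem~\ref{tw1}\,(i). Since $\rS$ is a fuzzy conjunction, monotonicity gives $\rS(a,0)\le \rS(1,0)=0$, so $a\rS 0=0$ for every $a$; and left-continuity of $x\mapsto x\rS y=\rS(x,y)$ is precisely the assumed left-continuity in the first coordinate. As $f,H(f)$ take values in $[0,1]$ and $\mu(\cA)\subset[0,1]$, every pair fed to $\rS$ lies in $[0,1]^2$ except at levels $t>1$, where $A\cap\{f\ge t\}=\emptyset$ and the term equals $\rS(t,0)=0$; setting $a\rS 0:=0$ for $a>1$ extends $\rS$ to $\moR_+^2$ without changing the integral.

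The key quantitative input is a level-set estimate phrased through $p_S$. Because $\rS$ is a semicopula, its neutral element $1$ forces $\rS(t,m)\le \rS(t,1)\wedge\rS(1,m)=t\wedge m$, whence
\begin{align*}
p_S=\sup_{t}\rS\big(t,\mu(A\cap\{f\ge t\})\big)\le \sup_{t}\big(t\wedge \mu(A\cap\{f\ge t\})\big)=\rI{A}{f}=:p.
\end{align*}
Thus for every $y<p_S\le p$, \cite[Lemma~9.7]{wang} gives $\mu(A\cap\{f\ge y\})\ge p\ge p_S$. This is exactly the seminormed counterpart of the estimate that powered Theorem~\ref{tw1}\,(i), and the point of the sentence preceding this corollary (``lower bounds by means of $\gI{\c,A}{f}$ instead of the Sugeno integral'').

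With this in hand I would imitate the proof of part (i). Fix $\eta>0$ small enough that $p_S-\eta\ge a_0$, which is possible whenever $p_S>a_0$. Since $H$ is nondecreasing on $[a_0,1]\ni p_S-\eta$, one has $\{f\ge p_S-\eta\}\subseteq\{H(f)\ge H(p_S-\eta)\}$; testing the level $s=H(p_S-\eta)$ in the supremum defining $\gIl{\rS,A}{H(f)}$ and using monotonicity of $\rS$ in its second coordinate then yields
\begin{align*}
\gIl{\rS,A}{H(f)}\ge \rS\big(H(p_S-\eta),\,\mu(A\cap\{f\ge p_S-\eta\})\big)\ge \rS\big(H(p_S-\eta),\,p_S\big).
\end{align*}
Letting $\eta\to 0^+$ and invoking left-continuity of $H$ at $p_S$ together with left-continuity of $\rS$ in the first coordinate gives $\gIl{\rS,A}{H(f)}\ge \rS(H(p_S),p_S)$.

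The step I expect to be the main obstacle is the boundary case $p_S=a_0$, where $p_S-\eta$ escapes $[a_0,1]$ and monotonicity of $H$ is unavailable just to the left of $a_0$; indeed, without left-continuity of $H$ at $a_0$ the bound can genuinely fail. Here I would instead fix $\ve>0$, use left-continuity of $H$ at $a_0$ to pick $\eta$ with $H(s)>H(a_0)-\ve$ for $s\in(a_0-\eta,a_0)$, so that $\{f\ge a_0-\eta\}\subseteq\{H(f)\ge H(a_0)-\ve\}$; testing the level $s=H(a_0)-\ve$ then gives $\gIl{\rS,A}{H(f)}\ge \rS(H(a_0)-\ve,p_S)$, and $\ve\to 0^+$ closes the case by left-continuity of $\rS$ in its first argument. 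Finally, sharpness is exhibited by a function constant on $A$, as in Theorem~\ref{tw1}\,(i): for a suitable $f=y_0\mI{A}$ both sides collapse to $\rS(H(p_S),p_S)$.
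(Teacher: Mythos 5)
Your proposal is correct and follows essentially the same route as the paper: the paper simply invokes Theorem~\ref{tw1}\,(i) with $\c=\rS$ to obtain $\gI{\rS,A}{H(f)}\ge \rS(H(p),p)$ for the Sugeno value $p=\rI{A}{f}$, and then uses the semicopula inequality $\rS(a,b)\le a\wedge b$ (hence $a_0\le p_S\le p$) together with monotonicity of $H$ and $\rS$ to pass to $\rS(H(p_S),p_S)$, whereas you inline that same argument and transfer via $p_S\le p$ inside the level-set estimate $\mu(A\cap\{f\ge y\})\ge p\ge p_S$ --- identical mathematics in a different order. The only substantive differences are at the edges: your careful treatment of the boundary case $p_S=a_0$ (where left-continuity of $H$ at $a_0$ with respect to the full domain is genuinely needed) is a correct refinement the paper does not spell out, while your sharpness claim for ``a suitable $f=y_0\mI{A}$'' glosses over the condition the paper states explicitly, namely that equality is attained for $f=\mu(A)\mI{A}$ only when $\rS(\mu(A),\mu(A))=\mu(A)$.
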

\begin{proof} Take $a\c b=\rS (a\wedge 1,b\wedge 1)$ in  \eqref{gensugeno}. 
It is clear that   
$$\gIl{\c,A}{f}=\gIl{\rS,A}{f}=\sup_{t\in [0,1]} \rS\big(t,\mu(A\cap\{ f\ge t\})\big).$$
As  $\rS(a,b)\le a\wedge b$ for all $a,b$, we have 
$a_0\le p_S\le \rI{A}{f}\le 1.$
Moreover, from Theorem \ref{tw1}\,(i) and monotonicity of $H$ on $[a_0,1]$ we get
\begin{align*}
\gIl{\rS,A}{H(f)}\ge \rS\Big(H\Big(\rIl{A}{f}\Big),\rIl{A}{f}\Big)\ge   \rS(H(p_S),p_S).
\end{align*} 
This bound is reached for $f=\mu(A)\mI{A}$ if 
$\rS(\mu(A), \mu(A))=\mu(A)$. 
\end{proof}

Next we find 
some upper bounds of Jensen type. Let $H\colon \moR_+\to \moR_+$ be 
 a~Borel measurable function. 
Denote by $H(p^-)$ and $H(p^+)$ the upper left-hand  limit and the upper right-hand  limit of 
$H$ at $p,$ respectively, that is, 
$
H(p^-)=\lim_{\ve\to 0} \sup H((p-\ve,p))$ with $H(0^-)=0,$ and $H(p^+)=\lim_{\ve\to 0}\sup H((p,p+\ve)).$

\begin{tw}\label{tw2}   
Let $\c\colon \moR_+\times \moR_+\to \moR_+$ be a~nondecreasing map such  that 
$x\mapsto x\c y$ is right-continuous for any fixed $y$ and $a\c 0=0$ for all $a.$ 
Assume that $f,H(f)\in\cF_{(X,\moR_+)}$ and $p=\rI{A}{f}<\infty.$
 \begin{enumerate}
 \item[(i)] The following bound is valid  
\begin{align}\label{in3}
\gIl{\c,A}{H(f)}\le \big[\big(H(p^+)\vee \sup H([0,p])\big)\c \mu(A)\big] \vee \big[\sup H\c p\big].
\end{align}
The equality holds in \eqref{in3} for $f=y_0\mI{A}$ if $H$ is right-continuous at $p$, $H(p)=\sup H([0,p])$ and $H(y_0)=\sup H$ for some $y_0$. 
\item[(ii)] If 
$\mu$ is weakly superadditive on $A$, then 
\begin{align}\label{in4}
\gIl{\c,A}{H(f)}\le \big[\big(H(p^-)\vee \sup H([p,\infty])\big)\c \mu(A)\big] \vee \big[\sup H\c (\mu(A)-p)\big]. 
\end{align} 
The equality in~\eqref{in4} is 
attained  for $f=\mu(A)\mI{A}$
if  $H$ is left-continuous at $p$ and $H(p)=\sup H([p,\infty])$. 
\end{enumerate}
\end{tw}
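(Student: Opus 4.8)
The plan is to mirror exactly the structure of the proof of Theorem~\ref{tw1}, but replacing lower-approximating step functions with upper-approximating ones, infima with suprema, and left-continuity with right-continuity. The key asymmetry to keep in mind is that now we bound $\gI{\c,A}{H(f)}$ \emph{from above}, so instead of dropping $H$ down to a step function below it, I will lift $H$ up to a step function $H_0\ge H$ and use monotonicity of the generalized Sugeno integral in the other direction.

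For part (i), I would fix $\ve>0$ and set $h(\ve)=\sup H([0,p+\ve])$, then define $H_0(s)=h(\ve)$ for $s\le p+\ve$ and $H_0(s)=\sup H$ for $s>p+\ve$, so that $H_0\ge H$ everywhere. Monotonicity gives
\begin{align*}
\gIl{\c,A}{H(f)}\le\gIl{\c,A}{H_0(f)}
=\big[h(\ve)\c\mu(A\cap\{f\le p+\ve\})\big]\vee\sup_{t>h(\ve)}\big\{t\c\mu(A\cap\{H_0(f)\ge t\})\big\}.
\end{align*}
The first bracket is at most $h(\ve)\c\mu(A)$ by monotonicity in the second argument, and on the second term the event $\{H_0(f)\ge t\}$ for $t>h(\ve)$ forces $f>p+\ve$, so by \cite[Lemma 9.7]{wang} its measure is $\le p$; since $t\le\sup H$ there, monotonicity bounds it by $\sup H\c p$. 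Letting $\ve\to0$ and invoking right-continuity of $x\mapsto x\c(\cdot)$ together with $h(\ve)\downarrow H(p^+)\vee\sup H([0,p])$ yields \eqref{in3}. For the equality claim I would check directly that $f=y_0\mI{A}$ with the stated hypotheses turns both inequalities into equalities.

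For part (ii), the roles of the two tails swap: I would take $h(\ve)=\sup H([p-\ve,\infty])$ and $H_0(s)=\sup H$ for $s<p-\ve$, $H_0(s)=h(\ve)$ for $s\ge p-\ve$, again with $H_0\ge H$. Here weak superadditivity of $\mu$ on $A$ is what lets me split $\mu(A)$ and control the contribution of the lower tail, exactly dual to the way weak subadditivity was used in Theorem~\ref{tw1}\,(ii); the estimate $\mu(A\cap\{f\ge p-\ve\})\ge p$ from \cite[Lemma 9.7]{wang} feeds the term $\sup H\c(\mu(A)-p)$, and letting $\ve\to0$ with $h(\ve)\downarrow H(p^-)\vee\sup H([p,\infty])$ gives \eqref{in4}, with equality verified on $f=\mu(A)\mI{A}$.

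The main obstacle I anticipate is bookkeeping the inequalities in the correct direction: because this is an upper bound, every use of monotonicity of $\c$ and of the integral must point the opposite way from Theorem~\ref{tw1}, and the limit step requires right-continuity rather than left-continuity, so the relevant one-sided limit is the decreasing limit of the suprema $h(\ve)$ rather than the increasing limit of infima. I would also be careful that the $\sup H\c p$ term genuinely dominates the high-$t$ part of the supremum uniformly in $t$, which is where the convention $\infty\cdot0=0$ and the finiteness $p<\infty$ are quietly needed.
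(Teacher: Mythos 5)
Your argument is correct and coincides with the paper's own proof of Theorem~\ref{tw2}: the same upper step functions $H_0\ge H$, the same appeal to $\mu(A\cap\{f>y\})\le p$ for $y>p$ from \cite[Lemma 9.7]{wang}, weak superadditivity to get $\mu(A\cap\{f<p-\ve\})\le\mu(A)-p$ in part (ii), and right-continuity of $x\mapsto x\c y$ to pass to the limit $\ve\to0$ along the decreasing suprema $h(\ve)$. One cosmetic slip in part (i): since $H_0\ge h(\ve)$ on all of $A$ (not only where $f\le p+\ve$), the low-$t$ part of the supremum is exactly $h(\ve)\c\mu(A)$ rather than $h(\ve)\c\mu(A\cap\{f\le p+\ve\})$, but as you immediately majorize that bracket by $h(\ve)\c\mu(A)$ the final bound \eqref{in3} is unaffected.
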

\begin{proof} 
(i)  Let $h(\ve)=\sup H([0,p+\ve])$ for all $\ve>0.$ 
Put $H_0(s)=\sup H$ for $s>p+\ve$ and $H_0(s)=h(\ve)$ for $s\le p+\ve$. Evidently, $H(s)\le H_0(s)$ for all $s.$
Therefore
\begin{align*}
\gIl{\c,A}{H(f)}&\le \gIl{\c,A}{H_0(f)}
=\sup _{0\le t\le h(\ve)}\{t\c \mu (A)\}\vee \sup_{t>h(\ve)} 
\left\{ t\c \mu(A\cap \{ H_0(f)\ge t\})\right\}
\\&= \big[h(\ve)\c \mu(A) \big]\vee \big[ \sup H\c \mu(A\cap \{f>p+\ve\})\big].
\end{align*}
As $\mu(A\cap\{f>y\})\le p$ for $y >p$, we  get, from the right-continuity of  $x\mapsto x\c p$, that
\begin{align}\label{a1}
\gIl{\c,A}{H(f)}\le \big[\big(H(p^+)\vee \sup H([0,p])\big)\c \mu(A)\big] \vee \big[\sup H\c p\big].
\end{align}
 The equality holds in \eqref{a1}, if $H$ is right-continuous at $p,$ $H(p)\ge \sup H([0,p]),$ 
and $f=y_0\mI{A},$ where $y_0$ is such that $H(y_0)=\sup H.$ 
\medskip

(ii) As the bound \eqref{in4} is obvious for $p=0$, we assume that $p>0.$ Put  $h(\ve)=\sup H([p-\ve,\infty])$ for $\ve\in (0,p)$.   
Set  $H_0(s)=\sup H$ for $s<p-\ve$ and $H_0(s)=h(\ve)$ for $s\ge p-\ve$. Since  $H(s)\le H_0(s)$ for all $s$, we get 
\begin{align*}
\gIl{\c,A}{H(f)}&\le \gIl{\c,A}{H_0(f)}=
\big[ h(\ve)\c \mu(A) \big]\vee \big[ \sup H\c \mu(A\cap \{f<p-\ve\})\big].
\end{align*}
Clearly, $\mu(A)\ge \mu(A\cap \{f<p-\ve\})+\mu(A\cap \{f\ge p-\ve\})$
and $\mu(A\cap \{f\ge p-\ve\})\ge p$, so
\begin{align*}
\gIl{\c,A}{H(f)}&\le  \big[h(\ve) \c \mu(A) \big]\vee \big[\sup H\c (\mu (A)-p)\big].
\end{align*}
Taking the limit as $\ve\to 0$, we obtain \eqref{in4} with equality if $H$ is left-continuous at $p$, 
$H(p)\ge  \sup H([p,\infty])$ and   
$f=\mu(A)\mI{A}$. 
\end{proof}

\begin{remark} The bound \eqref{in3} (resp. \eqref{in4})  is sharp for each $p,$
if $H$  is a~nondecreasing right-continuous function (resp. nonincreasing left-continuous function). Given  a~superadditive  monotone measure $\mu$ and a~continuous quasiconcave function $H,$ Theorem~\ref{tw2} implies that 
\begin{align}\label{in3a}
\gIl{\c,A}{H(f)}\le &\big[\sup H([0,p])\c \mu(A)\big]\vee \big[\sup H([p,\infty])\c \mu(A)\big]\nonumber\\
&\vee \big[\sup H\c p\big] \vee \big[\sup H\c (\mu(A)-p)\big]
\end{align} 
and  the bound \eqref{in3a} is sharp for every $p.$
\end{remark}

The following result is an immediate consequence of Theorem \ref{tw2}. 

\begin{corollary} \label{co2} 
Assume that a~continuous function $H$ is increasing on $[0,c]$ and decreasing on $[c,\infty]$, where $c\in [a,b]\subset \mR_+.$ If  $f\in \cF_{(X,[a,b])}$ and $p=\rI{A}{f}\le c$, then 
\begin{align*}
\rIl{A}{H(f)}\le (H(p)\vee p)\wedge H(c)\wedge \mu(A).
\end{align*}	
	  Moreover, if $c<p<\infty$ and $\mu$ is a~weakly superadditive monotone measure on $A,$ then 
	  \begin{align*}
	  \rIl{A}{H(f)}\le\big(H(p)\vee (\mu(A)-p)\big)\wedge H(c)\wedge \mu(A).
	  \end{align*}
\end{corollary}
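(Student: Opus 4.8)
The plan is to derive Corollary~\ref{co2} directly from Theorem~\ref{tw2} by specializing $\circ=\wedge$ and exploiting the hypotheses on $H$. First I would record the structural facts about $H$: since $H$ is continuous, all one-sided upper limits coincide with the value, so $H(p^+)=H(p^-)=H(p)$; since $H$ is increasing on $[0,c]$ and decreasing on $[c,\infty]$, its global supremum is $\sup H=H(c)$, attained at $c$. For the first part, where $p\le c$, the function is increasing on $[0,p]$, so $\sup H([0,p])=H(p)$, and combining with $H(p^+)=H(p)$ gives $H(p^+)\vee\sup H([0,p])=H(p)$. Plugging $\circ=\wedge$ into \eqref{in3} then yields
\begin{align}\label{co2-step}
\rIl{A}{H(f)}\le \big[H(p)\wedge\mu(A)\big]\vee\big[H(c)\wedge p\big].
\end{align}

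The main work is then to rewrite the right-hand side of \eqref{co2-step} into the stated closed form $(H(p)\vee p)\wedge H(c)\wedge\mu(A)$. The key inequality here is $p\le H(c)$: indeed $p=\rI{A}{f}\le\mu(A)$ always, and one must also observe that $H(p)\le H(c)$ because $p\le c$ and $H$ is increasing on $[0,c]$, while $\mu(A)$ enters through the term $H(p)\wedge\mu(A)$. With $H(p)\le H(c)$ and $p\le H(c)$ in hand, a short lattice computation collapses the two-term maximum: the first bracket is bounded by $H(c)\wedge\mu(A)$, the second bracket equals $p$ when $p\le H(c)$, and distributing $\wedge$ over $\vee$ gives the claimed expression. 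I expect this algebraic manipulation with $\wedge$ and $\vee$ to be the only delicate point, since one must verify the constraints $f\in\cF_{(X,[a,b])}$ with $c\in[a,b]$ guarantee $p=\rI{A}{f}\in[a,b]$ and hence the monotonicity direction of $H$ is the one used.

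For the second part, where $c<p<\infty$, the relevant monotonicity is that $H$ is decreasing on $[p,\infty]$, so $\sup H([p,\infty])=H(p)$, and continuity gives $H(p^-)=H(p)$; hence $H(p^-)\vee\sup H([p,\infty])=H(p)$. Since $\mu$ is weakly superadditive on $A$, I may invoke \eqref{in4} with $\circ=\wedge$, obtaining
\begin{align}\label{co2-step2}
\rIl{A}{H(f)}\le\big[H(p)\wedge\mu(A)\big]\vee\big[H(c)\wedge(\mu(A)-p)\big].
\end{align}
The same type of lattice simplification as before, now using $\mu(A)-p\le H(c)$ (which follows from $\mu(A)-p\le\mu(A)$ together with the fact that $H(c)=\sup H$ dominates the relevant quantities) and $H(p)\le H(c)$, collapses \eqref{co2-step2} into $\big(H(p)\vee(\mu(A)-p)\big)\wedge H(c)\wedge\mu(A)$.

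The hard part will be the bookkeeping in the two $\min$/$\max$ collapses: one has to check carefully which of $H(p)$, $p$, $\mu(A)-p$, $H(c)$, $\mu(A)$ dominate the others under the stated hypotheses, and confirm that the sharpness claims of Theorem~\ref{tw2} are not needed here since Corollary~\ref{co2} asserts only the inequality. Everything else is a mechanical substitution of $\circ=\wedge$ and of the continuity/monotonicity identities for the one-sided limits of $H$.
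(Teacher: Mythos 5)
Your proposal follows the paper's proof exactly: apply Theorem~\ref{tw2} with $\circ=\wedge$, use continuity and the monotonicity pattern of $H$ to reduce $H(p^+)\vee\sup H([0,p])$ (resp.\ $H(p^-)\vee\sup H([p,\infty])$) to $H(p)$ and $\sup H$ to $H(c)$, and then collapse the resulting two-term maximum into the stated closed form. The one place where your write-up goes astray is the justification of that collapse: you invoke $p\le H(c)$ and $\mu(A)-p\le H(c)$ as ``key inequalities,'' but neither follows from the hypotheses (nothing prevents $H(c)=0.1$ while $p=5$), and $p\le\mu(A)$ certainly does not imply $p\le H(c)$. Fortunately these inequalities are not needed. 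Writing $u=H(p)$, $w=H(c)$, $v=\mu(A)$ and $z=p$ (resp.\ $z=\mu(A)-p$), distributivity of $\wedge$ over $\vee$ gives
$(u\vee z)\wedge w\wedge v=(u\wedge w\wedge v)\vee(z\wedge w\wedge v)=(u\wedge v)\vee(z\wedge w)$,
where the last step uses only $u\le w$ (because $H(c)=\sup H$) and $z\le v$ (because $p\le\mu(A)$, hence also $\mu(A)-p\le\mu(A)$). With that repair the lattice identity is exactly the one the paper records, and your argument is complete and coincides with the published proof.
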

\begin{proof}
Recall that $\rI{A}{f}\le \mu(A)$. Apply Theorem 2.2 with $\circ =\wedge$ and observe that $(H(p)\wedge \mu(A))\vee(H(c)\wedge p)=(H(p)\vee p)\wedge H(c)\wedge \mu(A)$ for $p\le c$ and $(H(p)\wedge \mu(A))\vee (H(c)\wedge (\mu(A)-p))=(H(p)\vee (\mu(A)-p))\wedge H(c)\wedge \mu(A)$ for $p>c.$
\end{proof}

As some nondecreasing binary maps $\c$ are not left-continuous (see e.g. \cite[Example 1.24]{klement2}), we provide 
modifications  of Theorems  \ref{tw1} and \ref{tw2}, which hold true without any continuity assumption on $\c.$  
Let us  recall that  a~monotone measure $\mu$ is {\it continuous from below} (resp. {\it from above}) if $\lim_{n\to \infty}\mu(A_n)=\mu(\lim_{n\to\infty}A_n)$ for all $A_n\in\cA$ such that $A_n\subset A_{n+1}$ (resp. $A_{n+1}\subset A_{n}$) for $n\in\mN.$  We say that $\mu$ is {\it continuous}, if it is both continuous from below and from above.  The following result generalizes Theorem~1  in
 \cite{flores3}.

\begin{tw}\label{tw3}Let
$H\colon \moR_+\to \moR_+$ and
 $\c\colon \moR_+\times \moR_+ \to \moR_+$ be a~nondecreasing map such that $a\c 0=0$ for all $a,$ $f\in\cF_{(X,\moR_+)}$ and 
 $p=\rI{A}{f}<\infty$. Let  $\mu$ be a~continuous monotone measure on $X.$
\begin{enumerate}
\item[(i)] The following inequalities hold true
\begin{align}\label{ss1}
\gIl{\c,A}{H(f)}&\ge \big[\inf H([p,\infty])\c p\big]\vee \big[\inf H\c \mu(A)\big],\\
\gIl{\c,A}{H(f)}&\le \big[\sup H([0,p])\c \mu(A)\big]\vee \big[\sup H \c p\big]\label{ss2}.
\end{align}
There is equality in  \eqref{ss1}  
for $f=\mu(A)\mI{A}$  
if $H(p)=\inf H([p,\infty]).$ 
Equality holds in \eqref{ss2} if $f=y_0\mI{A}$, 
$H(p)=\sup H([0,p])$ and $H(y_0)=\sup H$ for some $y_0$.
\item[(ii)] If $\mu$ is weakly subadditive on $A$, then 
\begin{align}\label{ss3}
\gIl{\c,A}{H(f)}\ge   \big[\inf H([0,p]) \c (\mu(A)-p)\big]\vee \mmm.
\end{align}
 The bound \eqref{ss3} is reached  by  $f=y_0\mI{A}$ if  $H(p)=\inf H([0,p])$ and $H(y_0)=\inf H$ for some $y_0$.
\item[(iii)]  If  $\mu$ is weakly superadditive  on $A$, then
\begin{align}\label{ss4}
\gIl{\c,A}{H(f)}\le \big[\sup H([p,\infty])\c \mu(A)\big]\vee \big[\sup H \c (\mu (A)-p)\big].
\end{align}
The equality is attained in \eqref{ss4}  for  $f=\mu(A)\mI{A}$ if $H(p)=\sup  H([p,\infty])$.
\end{enumerate}
\end{tw}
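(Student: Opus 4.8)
The plan is to prove all four estimates by a single scheme that trades the limiting argument in $\c$ (which is unavailable here) for a limiting argument in $\mu$. The two facts I would establish first, combining \cite[Lemma 9.7]{wang} with the continuity of $\mu$, are
\begin{align*}
\mu(A\cap\{f\ge p\})\ge p \qquad\text{and}\qquad \mu(A\cap\{f>p\})\le p.
\end{align*}
For the first, I pick $y_n\uparrow p$; then $A\cap\{f\ge y_n\}$ is a decreasing sequence whose intersection is $A\cap\{f\ge p\}$, so continuity from above gives $\mu(A\cap\{f\ge p\})=\lim_n\mu(A\cap\{f\ge y_n\})\ge p$, each term being $\ge p$ by Lemma 9.7. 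For the second, I pick $y_n\downarrow p$ and apply continuity from below to the increasing sequence $A\cap\{f>y_n\}\uparrow A\cap\{f>p\}$. These two inequalities are exactly where the continuity hypothesis on $\mu$ enters, and they play the role that left-/right-continuity of $\c$ played in Theorems~\ref{tw1} and~\ref{tw2}.

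For the lower bounds I would argue by exhibiting one good threshold in \eqref{gensugeno}. To obtain \eqref{ss1}, I note that on $\{f\ge p\}$ one has $H(f)\ge \inf H([p,\infty])$, so $\{f\ge p\}\subset\{H(f)\ge \inf H([p,\infty])\}$ and hence $\mu(A\cap\{H(f)\ge \inf H([p,\infty])\})\ge\mu(A\cap\{f\ge p\})\ge p$; taking $t=\inf H([p,\infty])$ and using monotonicity of $\c$ in its second argument yields the term $\inf H([p,\infty])\c p$. Taking instead $t=\inf H$, for which $\{H(f)\ge \inf H\}=X$, gives $\inf H\c\mu(A)$, and the maximum of the two is \eqref{ss1}. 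Inequality \eqref{ss3} is the mirror argument: $\{f\le p\}\subset\{H(f)\ge\inf H([0,p])\}$, and weak subadditivity applied to $B=\{f\le p\}$ together with $\mu(A\cap\{f>p\})\le p$ gives $\mu(A\cap\{f\le p\})\ge\mu(A)-p$, so the threshold $t=\inf H([0,p])$ produces $\inf H([0,p])\c(\mu(A)-p)$.

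The upper bounds \eqref{ss2} and \eqref{ss4} I would obtain by splitting the supremum in \eqref{gensugeno} at a single level. For \eqref{ss2}, I split at $\sup H([0,p])$: when $t\le \sup H([0,p])$ the crude estimate $t\c\mu(A\cap\{H(f)\ge t\})\le \sup H([0,p])\c\mu(A)$ applies; when $t>\sup H([0,p])$, any $x$ with $H(f(x))\ge t$ forces $f(x)>p$, so $\{H(f)\ge t\}\subset\{f>p\}$, whence $\mu(A\cap\{H(f)\ge t\})\le p$ and thus $t\c\mu(A\cap\{H(f)\ge t\})\le \sup H\c p$; the maximum over the two ranges is \eqref{ss2}. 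Estimate \eqref{ss4} is symmetric: I split at $\sup H([p,\infty])$, observe that $t>\sup H([p,\infty])$ forces $f<p$ on $\{H(f)\ge t\}$, and bound $\mu(A\cap\{f<p\})\le\mu(A)-p$ using weak superadditivity applied to $B=\{f<p\}$ together with $\mu(A\cap\{f\ge p\})\ge p$. Throughout, only monotonicity of $\c$ and the identity $a\c0=0$ are used on the binary map.

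Finally, the stated equality cases I would verify by direct computation. For $f=\mu(A)\mI{A}$ one checks $p=\mu(A)$, while $A\cap\{H(f)\ge t\}$ equals $A$ for $t\le H(p)$ and $\emptyset$ for $t>H(p)$, so the integral collapses to $H(p)\c\mu(A)$, which matches \eqref{ss1} (resp.\ \eqref{ss4}) under the hypothesis $H(p)=\inf H([p,\infty])$ (resp.\ $H(p)=\sup H([p,\infty])$); the choice $f=y_0\mI{A}$ with $H(y_0)=\inf H$ (resp.\ $\sup H$) is handled analogously for \eqref{ss3} and \eqref{ss2}. The only genuinely delicate point is the first paragraph: the whole theorem hinges on upgrading the facts of Lemma~9.7, valid at levels $y\ne p$, to the endpoint $p$ itself, and this upgrade is precisely what continuity of $\mu$ supplies. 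Once the two endpoint inequalities are in hand, the remaining steps are routine monotonicity estimates, and no regularity of $\c$ beyond nondecreasingness is ever invoked.
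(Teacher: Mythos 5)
Your proof is correct and follows essentially the same route as the paper: the paper's own proof just says to repeat the arguments of Theorems~\ref{tw1} and~\ref{tw2} with $\ve=0$, using exactly the two endpoint inequalities $\mu(A\cap\{f\ge p\})\ge p$ and $\mu(A\cap\{f>p\})\le p$ that you derive from continuity of $\mu$ and \cite[Lemma 9.7]{wang}. Your direct choice of test thresholds (for the lower bounds) and splitting of the supremum (for the upper bounds) is just an unpacked version of the paper's auxiliary step functions $H_0$, so there is no substantive difference.
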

\begin{proof} The proof of Theorem \ref{tw3} is 
similar to those of Theorem \ref{tw1} and \ref{tw2}; just put $\ve=0$ and use the fact that  if $\mu$  is a~continuous monotone measure, then  
$\mu(A\cap\{f\ge p\})\ge p$ and $\mu(A\cap\{f>p\})\le p.$ 
The last statement follows easily from the bounds 
$\mu(A\cap\{f\ge y\} )\ge p$ for $y<p$ and
$\mu(A\cap\{f\ge y\} )\le p$ for $y>p$ (see also \cite[Lemma 9.5]{wang}).
\end{proof} 
 
   Note that the bounds
in Theorem~\ref{tw3} may be better than  their counterparts in  Theorems \ref{tw1} and \ref{tw2}. 
\begin{example}
Let $\mu$ be the Lebesgue measure on $X=[0,2]$. If $f(x)=x$ and $H(x)=0.5\mI{\{1\}}(x)+x^2\mI{(1,2]}(x),$ 
then $p=\rI{X}{f}=1$ and $\rI{X}{H(f)}=1$.
Inequality \eqref{in1} gives us the trivial bound  $\rI{X}{H(f)}\ge 0$, as $H(1_-)=0,$ while 
from \eqref{ss1} 
we get
$\rI{X}{H(f)}\ge 0.5.$
\end{example}

\begin{remark} 
 Corollary 3.6 of \cite{abba}  gives the upper bound for the Sugeno integral of a concave function, but  
the following counterexample shows that 
the result 
is false if $m>0$, where $m\in \partial \varphi(p)$. 
We follow the notation of \cite{abba}. Let $X=A=[0,1]$ and $\mu$ be the Lebesgue measure. 
Take $\varphi(x)=\sqrt{x}$ and $f(x)=0.5x.$  Then
$f(X)=[0,0.5],$ $p=1/3$ and $m=\varphi'(p)=0.5\sqrt{3}>0.$ By Corollary 3.6 in
 \cite{abba} we get 
\begin{align}\label{nn1}
\rIl{A}{\varphi(f)}\le \frac{m}{m+1} (0.5-p)+\frac{1}{m+1}\varphi(p).
\end{align}
An easy computation shows that $\rI{A}{\varphi(f)}=0.5$ and
the right-hand side of \eqref{nn1} is approximately equal to $0.39,$
so inequality \eqref{nn1} is invalid.
\end{remark}

\section{Jensen inequalities for nonnegative concave functions}

In this section we give some Liapunov type  inequalities, that is, we evaluate the integral
$\gI{\c,A}{H(f)}$ by means of  integrals $\gI{\c,A}{G(f)}$ and $\gI{\bullet,A}{f}.$  As a~consequence, we  obtain 
some new  Jensen type inequalities for nonnegative concave  functions.

\begin{tw}\label{tw4}
Let $\circ,\bullet \colon \mR_+\times\mR_+\to \mR_+$ be nondecreasing maps  such that $a\c 0=a\bullet 0=0$ for all $a$.
Let $H\colon \mR_+\to \mR_+$, $\mu\in\cM_{(X,\cA)},$ $A\in\cA,$ $f\in\cF_{(X,\mR_+)}$ and $p=\gI{\bullet,A}{f}<\infty.$
Assume that $(a+b)\c c\le (a\c c)+(b\c c)$ for all $a,b,c$. 
If 
there exists $m_p\in\mR$ such that 
$H(y)\le H(p)+m_p(y-p)$ for $y\ge 0,$   
then the following attainable bound holds  true
\begin{align}\label{in8}
\gIl{\c,A}{H(f)}\le \inf_{c\in \mR}\Big\{\big[\big(H(p)+m_p(c-p)\big)^+\c \mu(A)\big]+\gIl{\c,A}{(m_p(f-c))^+}\Big\},
\end{align}
where $a^+=a\vee 0.$
\end{tw}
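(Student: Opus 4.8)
The plan is to exploit the tangent-line (supporting-hyperplane) inequality $H(y)\le H(p)+m_p(y-p)$ for all $y\ge 0$, which is the hypothesis, together with the subdistributivity assumption $(a+b)\c c\le (a\c c)+(b\c c)$. The key idea is that for \emph{any} fixed $c\in\mR$ I can bound $H(f(x))$ pointwise from above by a sum of two nonnegative pieces, and then push this bound through the generalized Sugeno integral using monotonicity and subadditivity of $\c$ in its first argument.

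First I would write, for a fixed $c\in\mR$ and any $y\ge 0$,
\begin{align*}
H(y)\le H(p)+m_p(y-p)=\big(H(p)+m_p(c-p)\big)+m_p(y-c).
\end{align*}
Taking positive parts and using that the left side $H(y)\ge 0$ is already nonnegative, I would argue
\begin{align*}
H(y)\le \big(H(p)+m_p(c-p)\big)^+ + \big(m_p(y-c)\big)^+,
\end{align*}
since whenever $m_p(y-c)\ge 0$ the second term absorbs it, and whenever $m_p(y-c)<0$ the constant term alone already dominates $H(y)$ (one checks the two sign cases; the nonnegativity of $H$ handles the degenerate situations). Substituting $y=f(x)$ gives a pointwise inequality between measurable nonnegative functions, namely $H(f)\le \alpha_c + (m_p(f-c))^+$ where $\alpha_c:=(H(p)+m_p(c-p))^+$ is a constant.

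Next I would apply the generalized Sugeno integral to both sides. By monotonicity of the integral (as used repeatedly in the proof of Theorem~\ref{tw1}) and the definition \eqref{gensugeno}, for each $t\ge 0$ the superlevel set of a sum is contained in a union of shifted superlevel sets; concretely I would use the subdistributivity $(a+b)\c c\le (a\c c)+(b\c c)$ applied with the measure values to split
\begin{align*}
\gIl{\c,A}{\alpha_c+(m_p(f-c))^+}\le \big[\alpha_c\c \mu(A)\big]+\gIl{\c,A}{(m_p(f-c))^+}.
\end{align*}
Here the constant part $\alpha_c$ integrates to $\alpha_c\c\mu(A)$ because $\{x:\alpha_c+g(x)\ge t\}\subset A$ contributes $\mu(A)$ at levels up to $\alpha_c$, and the remainder is controlled by $g=(m_p(f-c))^+$; subdistributivity is exactly what lets the supremum over $t$ of a sum be dominated by the sum of the two integrals. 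Combining with $H(f)\le \alpha_c+(m_p(f-c))^+$ and taking the infimum over $c\in\mR$ yields \eqref{in8}.

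The main obstacle I anticipate is justifying the splitting step rigorously, i.e.\ turning the \emph{pointwise} bound $H(f)\le \alpha_c+g$ into the \emph{integral} bound $\gI{\c,A}{\alpha_c+g}\le[\alpha_c\c\mu(A)]+\gI{\c,A}{g}$. This is not automatic from subdistributivity of $\c$ alone because the supremum defining the integral couples the two terms through a single threshold $t$; the clean way is to fix $t$, note $\{\alpha_c+g\ge t\}=\{g\ge t-\alpha_c\}$, and show $t\c\mu(A\cap\{g\ge t-\alpha_c\})\le [\alpha_c\c\mu(A)]+\big[(t-\alpha_c)^+\c\mu(A\cap\{g\ge t-\alpha_c\})\big]$ by writing $t=\alpha_c+(t-\alpha_c)$ and invoking $(a+b)\c c\le(a\c c)+(b\c c)$ together with monotonicity $\mu(A\cap\{g\ge\cdot\})\le\mu(A)$. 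Taking the supremum over $t\ge0$ on both sides then gives the desired split. Attainability would be checked by exhibiting a function $f$ and value $c$ for which every inequality above becomes an equality, presumably a constant or two-valued indicator function as in the earlier theorems.
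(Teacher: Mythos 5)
Your proof is correct and follows essentially the same route as the paper's: the tangent-line bound, passage to positive parts, the translation property $\gI{\c,A}{(a+f)}\le (a\c \mu(A))+\gI{\c,A}{f}$ obtained from subdistributivity of $\c$ (which the paper merely cites from an earlier reference, so your explicit level-set argument splitting $t=\alpha_c+(t-\alpha_c)^+$ is a welcome elaboration), and finally the infimum over $c$. The only piece you leave unfinished is the attainability claim, which the paper settles by taking $f=(a\bullet\mu(A))\mI{A}$ and $c=a\bullet\mu(A)$ under the extra assumptions that $\mu(A)\bullet\mu(A)=\mu(A)$ and $\bullet$ is associative.
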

\begin{proof}
By the assumption on $H$, we obtain  
\begin{align}\label{doda1}
\gIl{\c,A}{H(f)}&\le \gIl{\c,A}{\big(H(p)+m_p(c-p)+m_p(f-c)\big)}\nonumber\\
&\le \gIl{\c,A}{\big[\big(H(p)+m_p(c-p)\big)^++\big(m_p(f-c)\big)^+\big]},
\end{align}
where $c\in \mR.$ 
It is easy to check that  the generalized Sugeno integral has the scale translation property,  i.e.,
\begin{align}\label{doda2}
\gIl{\c,A}{(a+f)}\le (a\c \mu(A))+\gIl{\c,A}{f}
\end{align}
 for all $a\ge 0$ under the condition $(x+y)\c z\le (x\c z)+(y\c z)$ for all $x,y,z\ge 0$
(see  \cite{bo}). 
Inequality \eqref{in8} follows from \eqref{doda1} and \eqref{doda2}.
Bound \eqref{in8} is reached by the function $f=(a\bullet\mu (A))\mI{A},$  
where $a\ge 0$, if $\mu(A)\bullet \mu (A)=\mu(A)$ and the map  
$\bullet$ is associative. This follows from \eqref{in8}  
applied to $c=a\bullet\mu(A).$
\end{proof}

Denote by $\partial H(x)$ the subdifferential of a~concave function $H$ at point $x$ (see~\cite{abba}).  

\begin{corollary}\label{co3} Let $\mu\in\cM_{(X,\cA)},$  $H\colon \mR_+\to \mR_+$ be   a~concave  function and 
$m_p\in \partial H(p),$ where  $p=\rI{A}{f}<\infty$ and $f\in\cF_{(X,\mR_+)}.$ Then  
\begin{align}\label{in99}
\rIl{A}{H(f)}\le \big[H(p)\wedge \mu(A)\big]+\rIl{A}{\big(m_p(f-p)\big)^+}.
\end{align}
\end{corollary}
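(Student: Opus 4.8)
The statement is Corollary~\ref{co3}, which I would prove by specializing Theorem~\ref{tw4} to the Sugeno integral, i.e.\ by taking $\c=\st=\bullet=\wedge$. First I observe that the minimum operation $\wedge$ satisfies all the hypotheses of Theorem~\ref{tw4}: it is nondecreasing, $a\wedge 0=0$ for all $a$, and crucially it is subdistributive in the required sense, namely $(a+b)\wedge c\le (a\wedge c)+(b\wedge c)$ for all $a,b,c\ge 0$. This last inequality is the one algebraic fact I would need to verify explicitly; it is elementary, since if $c\le a+b$ then the left side is $c$ and the right side is at least $c$ (as at least one of the two terms on the right contributes a positive amount summing to $c$), while if $c> a+b$ the left side equals $a+b=(a\wedge c)+(b\wedge c)$. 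With $\bullet=\wedge$ we also have $p=\gI{\wedge,A}{f}=\rI{A}{f}<\infty$, matching the hypothesis of the corollary.

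Next, since $H$ is concave and $m_p\in\partial H(p)$, the subgradient inequality $H(y)\le H(p)+m_p(y-p)$ holds for all $y\ge 0$, which is exactly the tangent-line bound required by Theorem~\ref{tw4}. Therefore Theorem~\ref{tw4} applies and yields
\begin{align*}
\rIl{A}{H(f)}\le \inf_{c\in\mR}\Big\{\big[\big(H(p)+m_p(c-p)\big)^+\wedge \mu(A)\big]+\rIl{A}{\big(m_p(f-c)\big)^+}\Big\}.
\end{align*}

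The final and main step is to evaluate (or bound) this infimum by choosing $c=p$. With this choice the first bracket becomes $\big(H(p)\big)^+\wedge\mu(A)=H(p)\wedge\mu(A)$, using that $H$ is nonnegative so $H(p)^+=H(p)$, and the second term becomes $\rIl{A}{(m_p(f-p))^+}$. Since the displayed bound holds for the infimum over all $c$, it holds in particular for $c=p$, giving precisely \eqref{in99}. The only point requiring a moment's care is confirming that $c=p$ is an admissible choice and that the resulting first term simplifies as claimed; there is no genuine obstacle here, as the concavity-induced tangent bound and the nonnegativity of $H$ handle everything. Thus the whole argument is a clean specialization, and I expect the subdistributivity check for $\wedge$ to be the only non-immediate verification.
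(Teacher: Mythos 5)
Your proposal is correct and follows essentially the same route as the paper, whose entire proof of Corollary~\ref{co3} is ``put $c=p$ and $\c=\bullet=\wedge$ in \eqref{in8}.'' You merely spell out the details the paper leaves implicit (the subdistributivity $(a+b)\wedge c\le (a\wedge c)+(b\wedge c)$, the concave subgradient inequality, and the simplification of the first bracket at $c=p$), all of which check out.
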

\begin{proof}
Put   $c=p$ and $\c=\bullet=\wedge$ in \eqref{in8}.
\end{proof}

Corollary~\ref{co3} shows that Theorem~\ref{tw4}  is a~generalization of Theorem 4.3 
 in
~\cite{abba}. 
Indeed, bound \eqref{in99} was given in  \cite[Theorem 4.3]{abba}  under the assumption that   $H$ is an increasing concave function, $A=\{x_1,x_2,\ldots,x_n\}$ and $f(x_1)\ge f(x_2)\ge \ldots \ge f(x_n)$. Note that  for $m_p\ge 0$ we get
\begin{align*}
\rIl{A}{\big(m_p(f-p)\big)^+}&=\sup_{t\ge 0}\left\{t\wedge \mu(A\cap \{ m_p(f-p)^+\ge t\})\right\}\\
&=\max_{i}\left\{\big(m_p(f(x_i)-p)^+\big)\wedge \mu_i\right\}=\max_{i}\left\{\big(m_p(f(x_i)-p)\big)\wedge \mu_i\right\},
\end{align*}
where  $\mu_i=\mu(\{x_1,\ldots,x_i\})$.

\medskip
By \eqref{in8} we also obtain the following inequality for the Sugeno integral 
\begin{align}\label{l1}
\rIl{A}{H(f)}\le \big[(H(p)-pm_p)^+\wedge \mu(A)\big]+\rIl{A}{(m_p)^+f},
\end{align}
where $p=\rI{A}{f}$ and $f\in \cF_{(X,\mR_+)}$. Further, if
 $0<m_p\le 1$ and $f\in\cF_{(X,[0,1])},$ then  
combining the fact that  $m_py\le m_p\wedge y$ for $m_p,y\in [0,1]$  
with 
comonotone minitivity of the Sugeno integral, we obtain the  Jensen type bound of the form
\begin{align*}
\rIl{A}{H(f)}\le \big[(H(p)-pm_p)^+\wedge \mu(A)\big]+(m_p\wedge p).  
\end{align*}

The following example shows that the infimum in \eqref{in8} can be  achieved  at $c\notin \{0,p\}$.

\begin{example} 
Let $X=\mR$, $A=[0,5]$ and $\mu$ be 
the Lebesgue measure. 
Take $H(x)=\sqrt{x}$ and $f(x)=x.$ Then $p=\rI{A}{f}=2.5$ and  
\begin{align*}
\rIl[\mu]{A}{H(f)}=\frac{-1+\sqrt{21}}{2}\approx 1.7913.
\end{align*}
Write $g(c)=\big[\big(H(p)+m_p(c-p)\big)^+\wedge \mu(A)\big]+\rI{A}{\big(m_p(f-c)\big)^+}.$ Clearly,  
\begin{align*}
g(c)=\big[\big(\sqrt{2{.}5}+m_p(c-2{.}5) \big)^+ \wedge 5\big]+\Big[\big((-m_pc)^+\wedge 5\big)\vee \Big(\frac{m_p}{m_p+1}(5-c)^+\Big)\Big],
\end{align*}
where $m_p=H'(p)=1/\sqrt{10}.$ 
After an easy calculation we get
$\inf_{c\in \mR}g(c)=g(-2{.}5)\approx 1{.}8019,$ 
so 
the difference between the upper bound \eqref{in8} and the exact value of 
integral $\rI{A}{H(f)}$ is about $0{.}0106.$  
\end{example}
\medskip

We also give a~Jensen type inequality for the Shilkret integral. 

\begin{corollary} 
Let   $H\colon \mR_+\to \mR_+$ be a~differentiable and concave function.
Then for all  $f\in\cF_{(X,\mR_+)}$ and $\mu\in\cM_{(X,\cA)}$ we get
\begin{align}\label{in80}
\gIl{\cdot,A}{H(f)}\le H(p)\mu(A)+\big[(H'(p))^+-H'(p)\mu(A)\big]p,
\end{align}
where $p=\gI{\cdot,A}{f}<\infty.$ In particular, if $\mu(A)=1$ and $H'(p)\ge 0,$ then 
\begin{align*}
\gIl{\cdot,A}{H(f)}\le H\Big( \gIl{\cdot,A}{f}\Big).
\end{align*}  
\end{corollary}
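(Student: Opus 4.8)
The statement to prove is the final Corollary, giving a Jensen type bound for the Shilkret integral (the case $\circ=\cdot$). The natural strategy is to specialize Theorem~\ref{tw4} to $\circ=\cdot$ and $\bullet=\cdot$, so that the governing integral is the Shilkret integral $p=\gI{\cdot,A}{f}$. The plan is first to verify the hypotheses of Theorem~\ref{tw4}: ordinary multiplication is nondecreasing, satisfies $a\cdot 0=0$, and is distributive, $(a+b)\cdot c=(a\cdot c)+(b\cdot c)$, so the subadditivity condition $(a+b)\circ c\le(a\circ c)+(b\circ c)$ holds with equality. Since $H$ is differentiable and concave, the supporting line inequality $H(y)\le H(p)+H'(p)(y-p)$ holds for all $y\ge 0$, so we may take $m_p=H'(p)$. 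Theorem~\ref{tw4} then applies.

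Next I would evaluate the bound~\eqref{in8} at a convenient choice of $c$ rather than computing the full infimum. The clean choice is $c=p$, which kills the translation term inside the positive part in the most transparent way. With $c=p$ the leading term becomes $(H(p))^+\cdot\mu(A)=H(p)\mu(A)$ (using $H\ge 0$), and the residual Shilkret integral is $\gI{\cdot,A}{(H'(p)(f-p))^+}$. The main work is to bound this residual term. Here I would split on the sign of $H'(p)$: when $H'(p)\ge 0$ one has $(H'(p)(f-p))^+=H'(p)(f-p)^+\le H'(p)f$, and when $H'(p)<0$ the positive part is supported on $\{f<p\}$. In either case the residual should be estimated by $H'(p)$ times a Shilkret integral of $f$ or $(f-p)^+$, and then controlled by $p=\gI{\cdot,A}{f}$ using positive homogeneity of the Shilkret integral in its integrand, $\gI{\cdot,A}{\lambda f}=\lambda\,\gI{\cdot,A}{f}$ for $\lambda\ge 0$, together with the monotonicity bound $\gI{\cdot,A}{(f-p)^+}\le\gI{\cdot,A}{f}=p$.

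The anticipated shape of the answer, $H(p)\mu(A)+[(H'(p))^+-H'(p)\mu(A)]p$, suggests that the right bookkeeping comes from writing the residual as a combination that produces exactly $(H'(p))^+ p$ minus the part already counted. Concretely, I expect the estimate to rest on the identity-level observation that for the Shilkret integral $\gI{\cdot,A}{(m_p(f-c))^+}$ one can bound the positive-part integrand by a multiple of $f$ and extract the factor $\mu(A)$ from the constant term, so that the two contributions recombine into the stated closed form. The \emph{main obstacle} will be handling the sign of $H'(p)$ uniformly so that the single expression $(H'(p))^+-H'(p)\mu(A)$ emerges in both the $H'(p)\ge 0$ and $H'(p)<0$ regimes; the positive-part function in the integrand does not split linearly, so care is needed to see that the chosen $c$ (possibly $c=0$ rather than $c=p$ when $H'(p)<0$, or taking the infimum explicitly) yields precisely this coefficient. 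Finally, the special case is immediate: setting $\mu(A)=1$ and $H'(p)\ge 0$ collapses the bracket to $(H'(p))^+-H'(p)=0$, leaving $\gI{\cdot,A}{H(f)}\le H(p)=H(\gI{\cdot,A}{f})$, which is the claimed Jensen inequality.
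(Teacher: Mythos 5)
Your setup is right: multiplication is nondecreasing and distributive, so the hypothesis $(a+b)\c c\le (a\c c)+(b\c c)$ of Theorem~\ref{tw4} holds with equality, differentiability plus concavity gives the supporting line with $m_p=H'(p)$, and your reduction of the special case $\mu(A)=1$, $H'(p)\ge 0$ is correct. The gap is in your main line of attack: evaluating \eqref{in8} at $c=p$ cannot yield the stated bound. At $c=p$ the right-hand side of \eqref{in8} equals $H(p)\mu(A)+\gI{\cdot,A}{(H'(p)(f-p))^+}$, whose second term is nonnegative; but when $\mu(A)>1$, $H'(p)>0$ and $p>0$, the target $H(p)\mu(A)+H'(p)(1-\mu(A))p$ is strictly smaller than $H(p)\mu(A)$, so no estimate of $\gI{\cdot,A}{(f-p)^+}$ in terms of $p$ can close the gap. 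The reason is structural: the term $-H'(p)\mu(A)p$ in \eqref{in80} does not come from the residual integral at all, but from the constant term $\big(H(p)+m_p(c-p)\big)^+\c\mu(A)$, and producing it forces the choice $c=0$.

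That is exactly what the paper does, and it works uniformly in the sign of $H'(p)$. With $c=0$ the constant term is $\big(H(p)-H'(p)p\big)^+\mu(A)=\big(H(p)-H'(p)p\big)\mu(A)$, the positive part being superfluous because the supporting line at $p$ evaluated at $0$ gives $H(p)-H'(p)p\ge H(0)\ge 0$; and the residual term is $\gI{\cdot,A}{(H'(p)f)^+}=\gI{\cdot,A}{(H'(p))^+f}=(H'(p))^+p$ by positive homogeneity of the Shilkret integral in the integrand (when $H'(p)<0$ the integrand vanishes identically and the integral is $0$). Summing the two pieces gives precisely $H(p)\mu(A)+\big[(H'(p))^+-H'(p)\mu(A)\big]p$. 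You do mention $c=0$ and ``taking the infimum explicitly'' as a fallback, but only for the regime $H'(p)<0$; the point you are missing is that $c=0$ is the correct choice in every case, after which the proof is a two-line computation with no case analysis on $\mu(A)$ or on the residual integral.
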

\begin{proof} Take $c=0,$ $m_p=H'(p)$ and $\c=\bullet=\cdot$ 
in Theorem \ref{tw4}. Observe that $
 0\le H(0)\le H(p)-H'(p)p.$
\end{proof}

\section{Jensen type bounds for real-valued  functions}

Let  $\star\colon \mR_+\times \mR_- \to \mR$ be a~nondecreasing map, where $\mR=(-\infty,\infty)$ and $\mR_-=(-\infty,0].$ Suppose that  $f\in\mathcal{F}_{(X,\mR)}$ and write  $f^+=f\vee 0$ and $f^-=(-f)\vee 0.$ 
We define 
the \textit{$\star$-symmetric Sugeno integral} of $f$ on $A\in \cA$ by the formula 
\begin{align}\label{g1}
\rrI{f}:=\Big(\rIl{A}{f^+}\Big)\star \Big(-\,\rIl{A}{f^-}\Big),
\end{align}
provided that $\rI{A}{f^+}<\infty$ and $\rI{A}{f^-}<\infty.$
Kawabe \cite{ka} 
examined properties of the $+$-symmetric Sugeno integral while Grabisch proposed to use the {\it symmetric Sugeno integral} defined by \eqref{g1} with the operator 
$a\ovee b=\textrm{sign}(a+b)\big(|a|\vee |b|\big),$ where $a,b\in\mR$ (see~\cite{gra0,gra1}).   
       
We derive both lower and upper bound 
on the $\star$-symmetric Sugeno integral of  $H(f)$ by means of  the  Sugeno integrals $p_1:=\rI{A}{f^+}$ and $p_2:=\rI{A}{f^-},$ where $H\colon \mR \to \mR$ is a~nondecreasing function such that  $H(0)=0$.  
By the assumption on $H$, we have  $H(f(x))\vee 0=H_1(f^+(x))$ 
and 
$\big(-H(f(x))\big)\vee 0=H_2(f^-(x))$ for all $x\in X,$  where $H_1(x)=H(x)$ and $H_2(x)=-H(-x)$ for $x\ge 0$. Of course,  functions
$H_1,H_2\colon \mR_+\to \mR_+$ are nondecreasing, $H_1(0)=H_2(0)=0$ and 
\begin{align}\label{o0}
\rrI{H(f)}=\Big(\rIl{A}{H_1(f^+)}\Big)\star \Big(-\rIl{A}{H_2(f^-)}\Big).
\end{align}
Since  $\st$ is a~nondecreasing binary map,  we can apply Theorems
\ref{tw1}-\ref{tw3} to obtain two-sided bounds on $\rrI{H(f)}$.
Below, 
we provide  
the upper bound.  
Assume, for simplicity of exposition, that $\mu\in \cM_{(X,\cA)}$ is continuous. 
Further, assume that $p_1,p_2<\infty$ and $p_1\le \sup H.$  
Putting  $\c=\wedge$ in \eqref{ss1} and \eqref{ss2} 
we get  
\begin{align}\label{o2}
\rIl{A}{H_2(f^-)}&\ge H_2(p_2)\wedge p_2=\big(-H(-p_2)\big)
\wedge p_2,\\
\label{o3}
\rIl{A}{H_1(f^+)}&\le \big(H(p_1)\wedge \mu (A)\big)\vee (\sup H\wedge p_1)=\big(H(p_1)\vee p_1\big)\wedge \mu(A),
\end{align}  
because $p_1\le \mu(A)$. 
As a~consequence of \eqref{o0}-\eqref{o3},  we obtain the following bound  
\begin{align}\label{001}
\rrI{H(f)}\le \big[\big(H(p_1)\vee p_1\big)\wedge \mu(A)\big]\star \big[H(-p_2)\vee (-p_2)\big].
\end{align}  
The equality is reached in \eqref{001} for $f(x)=\mu(B)\mI{B}(x)-\mu(A\backslash B)\mI{A\backslash B}(x),$
where  $B\subset A$ is  such that $\mu(B),\mu(A\backslash B)<\infty$ and $H(\mu(B))=\mu(B)$.
Summing up, we arrive at the following result. 
\begin{tw}
Let $H\colon \mR\to \mR$ be a~nondecreasing function such that $H(0)=0.$ 
Then the sharp inequality \eqref{001} holds true for any continuous monotone measure $\mu$ and for all $f\in\cF_{(X,\mR_+)}$ such that the integrals
 $\rI{A}{f^+}$, $\rI{A}{f^-}$ are finite and  
 $\rI{A}{f^+}\le \sup H.$ 
\end{tw}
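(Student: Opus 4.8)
The plan is to reduce the statement to the one-sided estimates of Theorem~\ref{tw3} applied separately to the positive and negative parts of $f$, and then to splice them together using the monotonicity of $\star$. First I would record the pointwise identities $H(f)\vee 0=H_1(f^+)$ and $(-H(f))\vee 0=H_2(f^-)$, which follow from $H$ being nondecreasing with $H(0)=0$ by checking the two cases $f(x)\ge 0$ and $f(x)<0$. Together with the definition \eqref{g1} of the $\star$-symmetric Sugeno integral, these identities give the decomposition \eqref{o0}, namely $\rrI{H(f)}=\big(\rIl{A}{H_1(f^+)}\big)\star\big(-\rIl{A}{H_2(f^-)}\big)$, where $H_1,H_2\colon\mR_+\to\mR_+$ are the nondecreasing functions $H_1(x)=H(x)$ and $H_2(x)=-H(-x)$ with $H_1(0)=H_2(0)=0$.

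Next I would bound the two inner Sugeno integrals. Since $\mu$ is continuous, I apply Theorem~\ref{tw3} with $\c=\wedge$. For the positive part I use the upper bound \eqref{ss2} with $H_1$ and $p_1=\rI{A}{f^+}$; monotonicity of $H_1$ gives $\sup H_1([0,p_1])=H(p_1)$ and $\sup H_1=\sup H$, and then the hypothesis $p_1\le\sup H$ together with the general fact $p_1\le\mu(A)$ collapses the term $\sup H\wedge p_1$ into $p_1\wedge\mu(A)$, yielding $\rIl{A}{H_1(f^+)}\le(H(p_1)\vee p_1)\wedge\mu(A)$, which is \eqref{o3}. For the negative part I use the lower bound \eqref{ss1} with $H_2$ and $p_2=\rI{A}{f^-}$; here $\inf H_2([p_2,\infty])=H_2(p_2)=-H(-p_2)$ and $\inf H_2=0$, so \eqref{o2} gives $\rIl{A}{H_2(f^-)}\ge H_2(p_2)\wedge p_2$. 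Negating turns this into the upper estimate $-\rIl{A}{H_2(f^-)}\le H(-p_2)\vee(-p_2)$.

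I would then combine the two estimates. Because $\star$ is nondecreasing in both arguments, replacing the first argument by its upper bound \eqref{o3} and the second argument (which lies in $\mR_-$, as $H(-p_2)\le H(0)=0$) by the upper bound from the previous step produces exactly the right-hand side of \eqref{001}, completing the inequality.

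For sharpness I would test the function $f=\mu(B)\mI{B}-\mu(A\backslash B)\mI{A\backslash B}$, where $B\subset A$ satisfies $\mu(B),\mu(A\backslash B)<\infty$ and $H(\mu(B))=\mu(B)$. Here $f^+=\mu(B)\mI{B}$ and $f^-=\mu(A\backslash B)\mI{A\backslash B}$, and a direct computation of the Sugeno integral of an indicator-type function gives $p_1=\mu(B)$ and $p_2=\mu(A\backslash B)$. Using $H(\mu(B))=\mu(B)$, evaluating both sides shows that each equals $\mu(B)\star\big(H(-p_2)\vee(-p_2)\big)$, so equality holds. The only genuinely delicate point is the algebraic simplification in the second paragraph---converting the raw two-parameter bounds of Theorem~\ref{tw3} into the clean form \eqref{001}---which hinges on the standing inequality $p_1\le\mu(A)$ and the assumption $p_1\le\sup H$, together with the sign flip for the negative part; everything else is routine bookkeeping.
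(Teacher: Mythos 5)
Your proposal is correct and follows essentially the same route as the paper: the decomposition \eqref{o0} via $H_1,H_2$, the application of \eqref{ss1} and \eqref{ss2} with $\c=\wedge$ to obtain \eqref{o2}--\eqref{o3}, the lattice simplification using $p_1\le\mu(A)$ and $p_1\le\sup H$, monotonicity of $\star$, and the same extremal function $f=\mu(B)\mI{B}-\mu(A\backslash B)\mI{A\backslash B}$ for sharpness. No gaps.
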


\begin{example}
Let $X=A=\{1,2,3\}.$ Suppose that
\begin{eqnarray*}
\mu(\{1\})=0.1,\quad &\mu(\{1,2\})=0.4,\quad \quad& f(1)=-1,\\
\mu(\{2\})=0.25,\quad &\mu(\{1,3\})=0.3,\quad \quad& f(2)=0.3,\\
\mu(\{3\})=0.2,\quad &\mu(\{2,3\})=0.6,\quad \quad& f(3)=1
\end{eqnarray*}  
and $\mu(\{1,2,3\})=1.$
If $H(x)=x^3,$ then
\begin{align*}
p_1=0.3,\quad p_2&=0.1,\quad 
\rIl{A}{H_1(f^+)}=0.2,\quad \rIl{A}{H_2(f^-)}=0.1.
\end{align*}
Hence, we get from \eqref{001} that 
\begin{align*}
0.1=0.2+(-0.1)&=\mathrm{Su}^{+}_{\mu,A}(H(f))\le p_1+(-p_2)=0.2,\\
0.2=0.2\ovee (-0.1)&=\mathrm{Su}^{\ovee}_{\mu,A}(H(f))\le 
p_1\ovee (-p_2)=0.3.
\end{align*}
\end{example}

\begin{example}
Assume that  $X=\mR$, $A=[-3,1]$ and $\mu=\sqrt{\lambda},$ where $\lambda$ is the Lebesgue measure. Put  $f(x)=x$ and $H(x)=x\mI{\mR_+}(x)+2x\mI{\mR_-}(x).$ Then
\begin{align*}
p_1=\frac{\sqrt{5}-1}{2},\quad p_2&=\frac{\sqrt{13}-1}{2},\quad 
\rIl{A}{H_1(f^+)}=\frac{\sqrt{5}-1}{2},\quad \rIl{A}{H_2(f^-)}=1.5.
\end{align*}
It follows from \eqref{001} that \begin{align*}
\frac{\sqrt{5}-4}{2}&=\mathrm{Su}^{+}_{\mu,A}(H(f))\le
p_1-p_2=\frac{\sqrt{5}-\sqrt{13}}{2},\\
-1.5&=\mathrm{Su}^{\ovee}_{\mu,A}(H(f))\le p_1
\ovee (- p_2)
=\frac{1-\sqrt{13}}{2}\approx -1.3.
\end{align*}
\end{example}
\bigskip

Similar result as in \eqref{001} can be obtained  provided that  $H$ is nonincreasing and $H(0)=0$.
If $\mu$ is subadditive and  $H\colon \mR\to \mR_+$ is nonincreasing for $x\le 0,$ nondecreasing for $x\ge 0$ and $H(0)=0,$ then 
\begin{align*}
\rIl{A}{H(f)}&\le \sup_{t\ge 0}\left\{t\wedge \mu\big(A\cap \lbrace{f\ge 0}\rbrace\cap \lbrace H(f)\ge t\rbrace \big)\right\}\\
&\quad+\sup_{t\ge 0}\left\{t\wedge \mu\big(A\cap \lbrace{-f\ge  0}\rbrace\cap \lbrace \widetilde{H}(-f)\ge t\rbrace \big)\right\}\\
&=\rIl{A}{H(f^+)}+\rIl{A}{\widetilde{H}(f^-)},
\end{align*}
where $\widetilde{H}(x)=H(-x)$ for $x\ge 0$. Thus,  
 the
upper bound can be derived from \eqref{in3} or  \eqref{ss2}. Clearly, for 
any $\mu\in\cM_{(X,\cA)},$
\begin{align*}
\rIl{A}{H(f)}&\ge \rIl{A}{H(f^+)} \vee \rIl{A}{\widetilde{H}(f^-)},
\end{align*}
so we can also give a~lower bound on $\rI{A}{H(f)}.$ 
Further, in a~similar way as above, we can also estimate the {\it $(\star,\c)$-asymmetric integral} defined by 
\begin{align*}
\mathrm{Su}^{\star,\c}_{\mu,\nu,A}(H(f)):=\Big(\gIl{\c,A}{H(f)\vee 0}\Big)\star
\Big(-\gIl[\nu]{\c,A}{(-H(f))\vee 0}\Big),
\end{align*}
where $\mu,\nu\in\cM_{(X,\cA)}.$
See e.g. \cite{pap} for 
the motivation of this 
definition with $\circ=\wedge$ and $\st=\ovee.$

\section{Conclusions}

In this paper, we have provided optimal lower/upper bounds of the Jensen type for the generalized Sugeno integral of  measurable real-valued functions. As a consequence, we have obtained the Jensen type inequalities  for the Sugeno integral,  Shilkret integral and q-integral.    
Our results generalize and improve a number of  
known results. 

The Jensen type inequalities for fuzzy integrals can be a~useful tool to solve both theoretical and practical problems in many areas of research as
the concept of the  Sugeno integral has numerous applications.  
 The Sugeno integral plays important role  in
decision-making problems under uncertainty and multi-criteria decision
problems \cite{du}. 
The famous Hirsch index \cite{hirsch}, which is closely related to the 
Sugeno integral \cite{TN}, is widely used in evaluation of research performance of individual scientists, research groups and universities.
Nurukawa and Torra \cite{Nur2} described the use of the Sugeno integral in decision making when modeling auctions. 
An application 
of risk 
theory can be found in \cite{kaluszka}.  
The Sugeno integral was applied to describe a~face recognition using modular neural networks with a~fuzzy logic method  \cite{melin}. 
Hu \cite{Hu} proposed a fuzzy data mining method with the Sugeno fuzzy integral that can effectively find a~compact set of fuzzy if-then classification rules.
Some applications to fuzzy inference systems were given in \cite{Nur1}. 
For more details about
possible applications of the  Sugeno integral, we refer to \cite{abba,gra2,gra,gra1,wang,Ying}.
\medskip
\medskip

{\bf Acknowledgements}
\medskip

The authors would like to thank the referees for their  comments which led to improvements in the paper.

\addcontentsline{toc}{chapter}{Literatura}
\end{document}